\UseAllTwocells \xyoption{frame} \CompileMatrices
\newtheorem{prop}{Proposition}[section]
\newtheorem{lem}[prop]{Lemma}
\newtheorem{thm}[prop]{Theorem}
\numberwithin{equation}{section}
\theoremstyle{definition}
\newtheorem{defn}[prop]{Definition}
\newtheorem{example}[prop]{Example}
\newtheorem{rmk}[prop]{Remark}
\newcommand{\noprint}[1]{}
\renewcommand{\tilde}{\widetilde}
\newcommand{\sm}{\mbox{\tiny sm}}
\newcommand{\virt}{\mbox{\tiny virt}}
\newcommand{\red}{\mbox{\tiny red}}
\newcommand{\pt}{\mathop{pt}}
\renewcommand{\AA}{{\mathfrak A}}
\newcommand{\zz}{{\mathbb Z}}
\newcommand{\kk}{{\mathbb K}}
\newcommand{\vv}{{\mathbb V}}
\newcommand{\nn}{{\mathbb N}}
\renewcommand{\ll}{{\mathbb L}}
\newcommand{\qq}{{\mathbb Q}}
\newcommand{\pp}{{\mathbb P}}
\newcommand{\cc}{{\mathbb C}}
\newcommand{\rr}{{\mathbb R}}
\newcommand{\tT}{{\mathcal T}}
\newcommand{\sL}{{\mathcal L}}
\newcommand{\sH}{{\mathcal H}}
\newcommand{\sF}{{\mathcal F}}
\newcommand{\tX}{{\widetilde X}}
\newcommand{\tD}{{\widetilde D}}
\newcommand{\tY}{{\widetilde Y}}
\newcommand{\oO}{{\mathcal O}}
\newcommand{\A}{{\mathcal A}}
\newcommand{\M}{\mathcal{M}}
\newcommand{\CR}{\mbox{\tiny CR}}
\newcommand{\age}{\mbox{age}}
\newcommand{\cM}{\overline{\mathcal{M}}}
\newcommand{\NE}{\mbox{NE}}
\newcommand{\inv}{\mbox{inv}}
\newcommand{\bbT}{\mathbb{T}}
\newcommand{\bN}{{\mathbf N}}
\newcommand{\Cstar}{\mathbb{C}^{\times}}
\newcommand{\FM}{\mathbb{FM}}
\newcommand{\<}{{\langle}}
\newcommand{\sfy}{\mathsf{y}}
\newcommand{\sfp}{\mathsf{p}} 
\newcommand{\uu}{\mathbb{U}}
\newcommand{\rk}{\mathop{\rm rk}}
\newcommand{\ev}{\mathop{\rm ev}\nolimits}
\newcommand{\Hom}{\mathop{\rm Hom}\nolimits}
\newcommand{\Aut}{\mathop{\rm Aut}\nolimits}
\numberwithin{equation}{section}
\def\Label#1{\label{#1}{\tt [#1]}\phantom{h}}
\def\Label{\label}
\title[CRC implies the monodromy conjecture]{The Crepant Transformation conjecture implies the monodromy conjecture}
\author{Yunfeng Jiang}
\address{Department of Mathematics\\ University of Kansas\\ 405 Snow Hall 1460 Jayhawk Blvd\\Lawrence\\ KS 66045\\ USA}
\email{y.jiang@ku.edu}
\author{Hsian-Hua Tseng}
\address{Department of Mathematics\\ Ohio State University\\ 100 Math Tower,  231 West 18th Ave. \\Columbus, \\OH 43210\\ USA}
\email{hhtseng@math.ohio-state.edu}
\date{\today}
\begin{document}
\begin{abstract}
In this note we prove that the crepant transformation conjecture for a crepant birational transformation of Lawrence toric DM stacks studied in \cite{CIJ} implies  the monodromy conjecture for the associated wall crossing of the symplectic resolutions of hypertoric  stacks, due to Braverman, Maulik and Okounkov. 
\end{abstract}

\maketitle

\section{Introduction}

Let $X_1, X_2$ be two birationally equivalent smooth symplectic Deligne-Mumford (DM) stacks, which are symplectic resolutions of a singular symplectic stack.  Suppose by mirror symmetry they correspond to 
the two large radius points $0, \infty$ in the compactified K\"ahler moduli space $\mathcal{M}$. 
The derived categories of $X_1$ and $X_2$ are expected to be equivalent:
$$D^b(X_1)\cong D^b(X_2).$$
This equivalence can be given by a choice of path from $0$ to $\infty$ and thus gives a map 
$$\rho: \pi_{1}(\mathcal{M})\to \mbox{Aut}(D^b(X_i)).$$  
Also  this map $\rho$ is considered as a map in the level of K-theory
$$\rho: \pi_{1}(\mathcal{M})\to \mbox{Aut}(K^0(X_i)),$$ 
where $K^0(X_i)$ are the Grothendieck $K$-group of $X_i$. 
The monodromy conjecture for symplectic resolutions was formulated by Braverman-Maulik-Okounkov in \cite{BMO}, and it  
can be stated as follows: the monodromy of the quantum connection 
$\bigtriangledown$  for $X_i$ is the same as the above monodromy given by the equivalence in the K-theory.  
In the case of Hilbert scheme of points on the plane, in \cite{BO} Bezrukavnikov and Okounkov have proved that the monodromy of the quantum differential equation is isomorphic to $\rho$ in the level of K-theory. 

There are several candidates for the birationally equivalent smooth symplectic DM stacks.  For example, stratified Mukai flops
in \cite{Fu}, \cite{Cautis}, \cite{Kawamata}; and Mukai type flops of Nakajima quiver varieties \cite{MO}. 
In this paper we prove the conjecture for the crepant birational transformation of hypertoric DM stacks given by the  wall crossing induced by a single wall crossing of the associated Lawrence toric DM stacks. 

From \cite{CIJ}, a single wall crossing of Lawrence toric DM stacks is given by varying stability conditions in the GIT construction of the Lawrence toric DM stacks. There is a one-to-one correspondence between the  GIT data of Lawrence toric DM stacks and the extended Lawrence stacky fans in \cite{Jiang}.  
The wall crossing of hypertoric DM stacks is also given by varying the stability conditions in the GIT construction.  Generalizing the construction in \cite{CIJ},  we introduce the {\em extended stacky hyperplane arrangements} and define the hypertoric DM stacks associated with them. It turns out that the hypertoric DM stack associated with the extended stacky hyperplane arrangement is isomorphic to the hypertoric DM stack associated with the underlying  stacky hyperplane arrangement, see \S \ref{GIT:hypertoricDMstack}, and there is a one-to-one correspondence between the GIT data and the extended stacky hyperplane arrangements. 
Hence  the wall crossing of hypertoric DM stacks  gives  the Mukai type flops.

Let $X_+\dasharrow X_-$ be a crepant birational map between two smooth Lawrence toric DM stacks given by a single wall crossing in \cite{CIJ}.   They are derived equivalent, and the equivalence is given by the Fourier-Mukai transformation, see \cite{Kawamata} and \cite{CIJS}.   In \cite{CIJ}, the authors prove that the equivariant Fourier-Mukai transformation on the K-theory matches the analytic continuation of the  $I$-function, hence matches the analytic continuation of the quantum connections which are determined by the $I$-function.  Thus the genus zero crepant transformation conjecture due to Y. Ruan \cite{R} is proved.  
The wall crossing $X_+\dasharrow X_-$ implies  that the associated birational transformation $Y_+\dasharrow Y_-$ for the hypertoric DM stacks  is crepant. The derived categories of $Y_+$ and $Y_-$ are equivalent, and the Fourier-Mukai functor gives such an equivalence. 
We  prove that the Fourier-Mukai transformation matches the analytic continuation of quantum connections of the hypertoric 
DM stacks, which is induced by the analytic continuation of the associated Lawrence toric DM stacks, see Theorem \ref{main:theorem}.

Crepant birational transformation of hypertoric DM stacks is the local model of stratified Mukai type flops for general symplectic DM stacks. The authors in \cite{Cautis}, \cite{Cautis2}, \cite{Kawamata} have proved that their derived categories are equivalent, and the kernel is also given by the Fourier-Mukai  transformation.  The construction and calculation in this paper will play a role in the study for general Mukai type flops by degeneration method to the local models.

The paper is outlined as follows.  In \S \ref{wall:crossing} we study the GIT data of hypertoric DM stacks and construct the wall crossing. In \S \ref{Fourier-Mukai} we calculate the Fourier-Mukai transformation for the wall crossing of the hypertoric DM stacks; and in 
 \S \ref{analytic:continuation} we talk about the analytic continuation and prove the monodromy conjecture.  

\subsection*{Conventions}
In this paper we work entirely algebraically over the field of
complex numbers.  (Quantum ) cohomology and $K$-theory groups are taken with
complex coefficients. 
We refer to \cite{BCS} for the construction of Gale dual
$\beta^{\vee}: \mathbb{Z}^{m}\to DG(\beta)$ from 
$\beta: \mathbb{Z}^{m}\to N$. We denote by $N\to \overline{N}$ the natural map
modulo torsion.  
For a positive integer $m$, we use $[m]$ to represent the set 
$\{1,\cdots,m\}$.

\subsection*{Acknowledgments}

The authors would like to thank D. Edidin, N. Proudfoot and M. MacBreen for the correspondence of symplectic resolution and K-theory of hypertoric DM stacks.  Y. J. especially thanks Gufang Zhao to draw his attention to the MIT-Northeastern seminar series on quantum cohomology, geometric representation theory and the monodromy conjecture.  Both authors are partially  supported by Simons Foundation. 

\section{Wall crossing of  Hypertoric Deligne-Mumford Stacks}\label{wall:crossing}

In this section we prove that the single wall crossing of Lawrence toric DM stacks in sense of \cite[\S 5]{CIJ}
 gives rise to a wall crossing of hypertoric DM stacks. 
 
 \subsection{Lawrence toric DM stacks and the GIT construction}\label{sec:LawrencetoricDMstack}
 \begin{defn}\label{extended:Lawrence:stacky:fan}
 An \emph{$S$-extended Lawrence stacky fan} 
is a quadruple $\mathbf{\Sigma}_{L}= (\bN_{L},\Sigma_{L},\beta_L,S)$, where:
\begin{itemize} 
\item $\bN_{L}$ is a finitely generated abelian group (torsions allowed); 
\item $\Sigma_{L}$ is a rational Lawrence simplicial fan in $\bN\otimes \rr$ in sense of \cite[\S 4]{HS};   
\item $\beta \colon \zz^N \to \bN$ is a homomorphism; 
we write $b_i = \beta(e_i)\in \bN$ for the image of the $i$th standard 
basis vector $e_i\in\zz^N$,
and write $\overline{b}_i$ for the image of $b_i$ in $\bN\otimes \rr$; 
\item $S \subset \{1,\dots,N\}$ is a subset, such that $N=2n+|S|$ for a nonnegative integer $n$. 
\end{itemize} 
such that:
\begin{itemize} 
\item each one-dimensional cone of $\Sigma_{L}$
is spanned by $\overline{b}_i$ for a unique 
$i\in \{1,\dots,N\}\setminus S$, and
each $\overline{b}_i$ with 
$i\in \{1,\dots,N\} \setminus S$ spans a one-dimensional 
cone of $\Sigma_{L}$; 

\item for $i\in S$, $\overline{b}_i$ lies in the support $|\Sigma_{L}|$ 
of the fan. 
\end{itemize} 
\end{defn}
The vectors $b_i$ for $i\in S$ are called the \emph{extended vectors}. 
The \emph{Lawrence toric DM stack} associated to an extended Lawrence stacky fan $\mathbf{\Sigma}_{L}$ depends only on the underlying Lawrence stacky fan and is defined as
the quotient stack
\[ 
 X_{\mathbf{\Sigma}_{L}} := [ U/K ], \quad \text{with} \quad 
 U=\cc^{2n}\setminus \vv(I_{\Sigma_{L}})\times (\Cstar)^{|S|},
\]
where 
$I_{\Sigma_{L}}$ is the irrelevant ideal of the Lawrence fan $\Sigma_{L}$ and $K$ is a finitely generated abelian group, which  acts on $\cc^N$ through the data of extended Lawrence stacky fan.

 We require that the extended Lawrence stacky fan $\mathbf{\Sigma}_{L}$ 
satisfies the following condition: 
\begin{itemize} 
\item[(C1)] the map $\beta \colon \zz^N \to \bN$ is surjective. 
\end{itemize} 
This condition can be always achieved by adding  
enough extended vectors.

The Lawrence toric DM stack $X_{\mathbf{\Sigma}_{L}}$ 
is semi-projective and has a torus fixed point, see \cite{JT2}. 
We explain the GIT construction of $X_{\mathbf{\Sigma}_{L}}$ from the extended Lawrence
stacky fan $\mathbf{\Sigma}_{L}=(\bN, \Sigma_{L},\beta_{L},S)$ satisfying (C1).
We define a free $\zz$-module $\ll$ by the exact sequence
\begin{equation}\label{eq:exact}
  \xymatrix{
    0 \ar[r] &
    \ll \ar[r] &
    \zz^N \ar[r]^\beta & 
   \bN  \ar[r] & 
    0 }
\end{equation}
and define $K := \ll\otimes \Cstar$. 
The dual of \eqref{eq:exact} is an exact sequence:
\begin{equation} 
\label{eq:divseq}
\xymatrix{
  0 \ar[r] &
  \bN^{\vee} \ar[r] &
  (\zz^N)^{\vee} \ar[r]^{\beta^\vee}& 
  \ll^{\vee}}
\end{equation} 
and we define the character $D_i \in \ll^\vee$ of $K$ to be the image 
of the $i$th standard basis vector in $(\zz^N)^\vee$ under 
$\beta^\vee$.  We have 
$$(\zz^N)^\vee\cong (\zz^n\oplus \zz^n)^\vee\oplus\zz^{|S|}$$ and 
$$\{D_1,\cdots,D_N\}=\{D_1,\cdots,D_n, -D_1,\cdots, -D_n, D_{1},\cdots, D_{|S|}\}. $$
For $I\subset \{1,2,\cdots,N\}$, let 
$\sigma_I$ denote the cone in $\bN\otimes\rr$ generated by $\overline{b}_i$ for $i\in I$. 
Let $\overline{I}$ be the complement of $I\subset \{1,2,\cdot, N\}$.
Set
\[
\A:=\left\{I\subset \{1,2,\cdots,N\} \mid
S \subset I, \ \text{$\sigma_{\overline{I}}$ 
is a cone of $\Sigma_{L}$}\right\} .
\]
to be the collection of \emph{anticones}.
The \emph{stability condition} $\theta\in \ll^{\vee}\otimes\rr$ is taken to 
lie in $\bigcap_{I \in \A} \angle_{I}$, where
$$\angle_I = \big\{ \textstyle\sum_{i \in I} a_i D_i \mid \text{$a_i \in
      \rr$, $a_i > 0$} \big\} \subset
    \ll^\vee \otimes \rr.$$
The property of Lawrence toric fan $\Sigma_{L}$ ensures that this intersection is non-empty.  
We understand $\angle_\emptyset=\{0\}$. 
Let $$\A_\theta= 
    \big\{ I \subset \{1,2,\ldots,N\} : \theta \in \angle_I \big\}.$$
Then we check that $\A_\theta=\A$.     
Let 
\[
 U_{\theta}=\bigcup_{I\in \A_\theta}(\cc^\times)^{I}\times \cc^{\overline{I}}
 := (\cc^\times)^{I}\times \cc^{\overline{I}}=\{(z_1,\cdots,z_N) \in \cc^N \mid
 z_i\neq 0~ \mbox{for}~ i\in I\}.
\]
The stability condition $\theta$ satisfies the following assumptions:
  \begin{enumerate}
  \item[(A1)] $\{1,2,\ldots,N\} \in \A_\theta$;
  \item[(A2)] for each $I \in \A_\theta$, 
the set $\{D_i : i \in I\}$ spans $\ll^\vee \otimes \rr$ over $\rr$.
\end{enumerate}
(A1) ensures that $X_\theta$ is non-empty; 
(A2) ensures that $X_\theta$ is a DM stack. 
Under these assumptions, $\A_\theta$ is closed under 
enlargement of sets; i.e., if $I\in \A_\theta$ and 
$I\subset J$ then $J \in \A_\theta$. 
The {Lawrence toric DM stack} 
is the quotient stack
$X_{\mathbf{\Sigma}_{L}}=X_\theta=[U_\theta/K]$.
The \emph{GIT data} for $X_{\mathbf{\Sigma}_{L}}$ consists of
\begin{itemize} 
\item $K \cong (\Cstar)^r$, a connected torus of rank $r$; 
\item $\ll = \Hom(\Cstar,K)$, the cocharacter lattice of $K$; 
\item $D_1,\ldots,D_n, -D_1, \cdots, -D_n, D_{1},\cdots, D_{|S|} \in \ll^\vee = 
\Hom(K,\Cstar)$, the characters of $K$; 
\item stability condition $\theta\in  \ll^{\vee}\otimes\rr$.
\end{itemize}

Conversely, to obtain an extended Lawrence stacky fan from GIT data, 
consider the exact sequence
(\ref{eq:exact}). 
Let $b_i = \beta(e_i)\in \bN$ and $\overline{b}_i\in \bN\otimes \rr$ 
be as above. 
The extended Lawrence stacky fan
$\mathbf{\Sigma}_\theta=(\bN, \Sigma_\theta, \beta_{L},S)$
corresponding to our data 
consists of the group $\bN$ and the map $\beta_{L}$ defined 
above, together with a fan $\Sigma_\theta$ in $\bN\otimes\rr$ 
and $S$ given by 
\begin{align*} 
\Sigma_\theta  = \{\sigma_{I} : \overline{I} \in \A_\theta\}, 
 \qquad S  = \{ i \in \{1,\dots,N\} : \overline{\{i\}}  
\notin \A_\theta\}. 
\end{align*} 
The quotient construction in \cite[\S2]{Jiang} 
coincides with the GIT quotient construction, 
and therefore $X_\theta$ is the Lawrence toric DM
stack corresponding to $\mathbf{\Sigma}_\theta$.

\subsection{Hypertoric DM stacks and the GIT data}\label{GIT:hypertoricDMstack}

We give the GIT construction of hypertoric DM stacks.  The GIT data of the hypertoric DM stack is useful for the construction of the wall crossing.  We introduce the notion of {\em extended stacky hyperplane arrangements} and define the corresponding hypertoric DM stacks.  We prove that there is a one-to-one correspondence between the GIT data of the hypertoric DM stacks and the  extended stacky hyperplane arrangements, generalizing the idea in \S \ref{sec:LawrencetoricDMstack}.

\begin{defn}\label{extended:stacky:hyperplane:arrangement}
An {\em extended stacky hyperplane arrangement}
$\AA=(\bN,\beta,\theta, S)$ consists of the following data:
\begin{itemize}
\item $\bN$ is a finitely generated abelian group;
\item $\beta: \zz^m\to \bN$ is a map.  Also we write $b_i=\beta(e_i)$, the image of the standard generator $e_i$ of $\zz^m$;
\item $S\subset [m]$ and let $n:=m-|S|$, then $\beta_{\red}: \zz^n\to \bN$ determined by $\{b_i|i\in [m]\setminus S\}$ is a map. 
Let 
$$\beta_{\red}^\vee: (\zz^n)^\vee\to \ll_{\red}^{\vee}$$ be the Gale dual of $\beta_{\red}$. 
The element $\theta\in \ll_{\red}^\vee$ is a generic element in sense of \cite[\S 2]{JT}, and 
$\AA_{\red}=(\bN,\beta_{\red},\theta)$ is a stacky hyperplane arrangement in sense of \cite[Definition 2.1]{JT}. 
\item $\{b_i|i\in S\}$ are called the extended  vectors. 
\end{itemize}
\end{defn}


We prove that an extended stacky hyperplane arrangement $\AA$ determines an extended Lawrence stacky fan, hence a Lawrence toric DM stack $X_\theta$.   The hypertoric DM stack $Y_\theta:=Y_\AA$ is a closed substack of $X_\theta$.  First we write down some diagrams of exact sequences from the extended stacky hyperplane arrangements:
\[
\begin{CD}
0 @ >>>\mathbb{Z}^{n}@ >>> \mathbb{Z}^{m}@ >>>
\mathbb{Z}^{m-n} @
>>> 0\\
&& @VV{\beta_{\red}}V@VV{\beta}V@VV{}V \\
0@ >>> \bN @ >{\cong}>>\bN@ >>> 0 @>>> 0.
\end{CD}
\]
Taking Gale dual yields:
\[
\begin{CD}
0 @ <<<(\mathbb{Z}^{n})^\vee@ <<< (\mathbb{Z}^{m})^\vee@ <<<
(\mathbb{Z}^{m-n})^\vee @
<<< 0\\
&& @VV{\beta_{\red}^\vee}V@VV{\beta^\vee=(\beta_{\red}^\vee,\beta_S^\vee)}V@VV{=}V \\
0@ <<< \ll_{\red}^\vee @ <{}<<\ll^\vee@ <<< \zz^{m-n} @<<< 0.
\end{CD}
\]
Considering the following diagram:
\[
\begin{CD}
0 @ <<<(\mathbb{Z}^{n})^\vee\oplus (\mathbb{Z}^{n})^\vee@ <<< (\mathbb{Z}^{n})^\vee\oplus (\mathbb{Z}^{n})^\vee\oplus(\zz^{|S|})^\vee@ <<<
(\mathbb{Z}^{m-n})^\vee @
<<< 0\\
&& @VV{(\beta_{\red}^\vee,-\beta_{\red}^\vee)}V@VV{(\beta_{\red}^\vee,-\beta_{\red}^\vee, \beta_S^\vee)}V@VV{=}V \\
0@ <<< \ll_{\red}^\vee @ <{}<<\ll^\vee@ <<< \zz^{m-n} @<<< 0,
\end{CD}
\]
and taking Gale dual again:
\[
\begin{CD}
0 @ >>>\mathbb{Z}^{2n}@ >>> \mathbb{Z}^{2n+|S|}@ >>>
\mathbb{Z}^{m-n} @
>>> 0\\
&& @VV{\beta_{L, \red}}V@VV{\beta_{L}}V@VV{}V \\
0@ >>> \bN_{L} @ >{\cong}>>\bN_{L}@ >>> 0 @>>> 0.
\end{CD}
\]
For the map $\beta_{L,\red}: \zz^{2n}\to \bN_{L}$, and the generic element $\theta\in \ll_{\red}^{\vee}$, there is a Lawrence simplicial fan 
$\Sigma_{\theta}$ constructed in \cite[\S 2]{JT}.  Hence we have an extended Lawrence stacky fan 
$$\mathbf{\Sigma_{L}}=(\bN_{L}, \Sigma_{\theta}, \beta_{L}, S)$$
in Definition \ref{extended:Lawrence:stacky:fan}.  The Lawrence toric DM stack 
$$X_{\theta}:=X_{\mathbf{\Sigma_{L}}}=[U_\theta/K]$$
is defined in \S \ref{sec:LawrencetoricDMstack}.
Let $\cc[z_1,\cdots,z_n,w_1,\cdots,w_n,u_1,\cdots,u_{|S|}]$ be the coordinate ring of $\cc^{N}=\cc^{2n+|S|}$. 
Let $I_{\beta^{\vee}}$ be the ideal 
\begin{equation}\Label{ideal1}
I_{\beta^{\vee}}:=\left\langle\sum_{i=1}^{n}(\beta_{\red}^{\vee})^{*}(x)_{i}z_{i}w_{i}|~ x\in \ll_{\red}\right\rangle,
\end{equation}
where $(\beta_{\red}^{\vee})^{*}$ is the map $\ll_{\red}\to \zz^{n}$ and $(\beta_{\red}^{\vee})^{*}(x)_{i}$ is the $i$-th component
of the vector $(\beta_{\red}^{\vee})^{*}(x)$.
Let $V_{\theta}\subset U_{\theta}$ be the closed subvariety determined by the ideal in (\ref{ideal1}).  The hypertoric DM stack 
$$Y_\theta=[V_\theta/K]$$
is a quotient stack. 

\begin{prop}
The hypertoric DM stack $Y_\theta=Y_{\AA}$ is isomorphic to the hypertoric DM stack $Y_{\AA_{\red}}$ associated to its underlying stacky hyperplane arrangement $\AA_{\red}$ defined in \cite[\S 2]{JT}. 
\end{prop}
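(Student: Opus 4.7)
My plan is to prove the isomorphism directly at the level of GIT quotients by exhibiting a splitting of the group $K$, of the semistable locus $V_\theta$, and of the extended part of the action, which will let me quotient in stages.

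\textbf{Step 1 (split the group).} The middle Gale-dual diagram in the text gives the short exact sequence
\[
0 \longrightarrow \ll_{\red}^\vee \longrightarrow \ll^\vee \longrightarrow \zz^{|S|} \longrightarrow 0,
\]
whose application of $\Hom(-,\Cstar)$ yields
\[
1 \longrightarrow T_S \longrightarrow K \longrightarrow K_{\red} \longrightarrow 1, \qquad T_S := (\Cstar)^{|S|}.
\]
By the decomposition $(\zz^N)^\vee \cong (\zz^n)^\vee \oplus (\zz^n)^\vee \oplus (\zz^{|S|})^\vee$ and the identification $\beta^\vee = (\beta_{\red}^\vee,-\beta_{\red}^\vee,\beta_S^\vee)$, the first $2n$ characters $D_1,\dots,D_n,-D_1,\dots,-D_n$ of $K$ lie in the image of $\ll_{\red}^\vee$ and therefore restrict trivially to $T_S$, while the extended characters $\{D_i : i \in S\}$ restrict to the coordinate characters of $T_S$.

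\textbf{Step 2 (split the open set and the ideal).} Since every anticone $I \in \A$ satisfies $S \subset I$, its complement $\overline I$ lies in $\{1,\dots,2n\}$, and the map $I \mapsto J := I \setminus S$ is a bijection with $\A_{\red}$. Unwinding the definitions in \S\ref{sec:LawrencetoricDMstack} gives the product decomposition
\[
U_\theta \;=\; U_{\theta,\red} \times (\Cstar)^{|S|},
\]
with $U_{\theta,\red} \subset \cc^{2n}$ the semistable open set for the reduced Lawrence GIT data. Moreover the ideal (\ref{ideal1}) is written in terms only of $z_i,w_i$ and of $\beta_{\red}^\vee$, so it depends only on $\ll_{\red}$ and not on the extended data. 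Consequently
\[
V_\theta \;=\; V_{\theta,\red} \times (\Cstar)^{|S|},
\]
where $V_{\theta,\red} \subset U_{\theta,\red}$ is exactly the hypertoric subvariety associated to $\AA_{\red}$ in \cite{JT}.

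\textbf{Step 3 (quotient in stages and conclude).} By Step 1, the subgroup $T_S \subset K$ acts trivially on the first $2n$ coordinates, hence trivially on $V_{\theta,\red}$, and by the standard free transitive action on the $(\Cstar)^{|S|}$ factor. Thus $T_S$ acts freely on $V_\theta$ with geometric quotient $V_{\theta,\red}$, and the residual action of $K/T_S = K_{\red}$ on $V_{\theta,\red}$ agrees with the $K_{\red}$-action arising from $\AA_{\red}$. Quotienting in stages then gives
\[
Y_\theta \;=\; [V_\theta/K] \;\cong\; [V_{\theta,\red}/K_{\red}] \;=\; Y_{\AA_{\red}},
\]
as required. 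The only point that requires any care is Step 2: one must verify that the Lawrence fan $\Sigma_\theta$ appearing in the extended stacky fan is the same fan built from $\beta_{L,\red}$ in \cite{JT}, but this is automatic since the extended vectors $b_i$ ($i \in S$) lie in $|\Sigma_\theta|$ and contribute no new rays. Once this identification is made, Steps 1 and 3 are essentially formal, and the substantive content is the clean splitting of group, open set, and ideal.
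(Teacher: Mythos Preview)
Your overall strategy---split the space as a product, identify an ``extended'' piece of the group acting freely on the extra $(\Cstar)^{|S|}$ factor, and cancel---is exactly what the paper's proof sketches (``there are more extra power of $\Cstar$'s modulo by the same rank of power of $\Cstar$'s''). Step~2 is correct and is the substantive part. However, Step~1 contains a genuine error that propagates into Step~3.

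The middle Gale-dual diagram in the text gives $0 \to \zz^{|S|} \to \ll^\vee \to \ll_{\red}^\vee \to 0$, not the sequence you wrote; the extended characters span the \emph{sub}lattice $\zz^{|S|}\subset\ll^\vee$, and $\ll_{\red}^\vee$ is the \emph{quotient}. Dualizing therefore yields $1 \to K_{\red} \to K \to T_S \to 1$: it is $K_{\red}$, not $T_S$, that sits naturally as a subgroup. Your claim that the first $2n$ characters $D_1,\dots,-D_n$ ``lie in the image of $\ll_{\red}^\vee$'' is not well-posed without a splitting, and in general no splitting makes it true. For instance, take $\bN=\zz$, $n=2$, $|S|=1$, $\beta_{\red}(a,b)=a+b$ and extended vector $b_3=1$; then $\ll^\vee\cong\zz^2$ with $D_1=(1,0)$, $D_2=(-1,1)$, $D_3=(0,-1)$, the first four Lawrence characters $\pm D_1,\pm D_2$ already span all of $\ll^\vee$, and hence the only subgroup of $K$ acting trivially on $(z,w)$ is the trivial one. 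So Step~3 fails as written.

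The repair is to swap the roles. Since the extended characters vanish on $K_{\red}$ and induce an isomorphism $K/K_{\red}\cong T_S$ acting simply transitively on $(\Cstar)^{|S|}$, the subgroup $K_{\red}$ is precisely the stabilizer of the slice $V_{\theta,\red}\times\{1\}$. The standard slice (induction) argument then gives
\[
[V_\theta/K]=[(V_{\theta,\red}\times(\Cstar)^{|S|})/K]\;\cong\;[V_{\theta,\red}/K_{\red}],
\]
and the residual $K_{\red}$-action on $V_{\theta,\red}$ is via the restrictions $D_i|_{K_{\red}}=D_i^{\red}$, which is exactly the action defining $Y_{\AA_{\red}}$. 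With this correction your argument goes through and is a faithful expansion of the paper's sketch.
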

\begin{proof}
Any extended stacky hyperplane arrangement $\AA$ has an underlying stacky hyperplane arrangement 
$\AA_{\red}$ by forgetting the extra data $S$. 
The hypertoric DM stack is defined by a stacky hyperplane arrangement in \cite[\S 2]{JT}.   Since for extended stacky fans, the associated toric DM stack is isomorphic to the toric DM stack associated to its underlying stacky fan, see \cite{Jiang}, the hypertoric DM stack $Y_\theta=Y_{\AA}$ associated an extended hyperplane arrangement is also isomorphic to the hypertoric DM stack $Y_{\AA_{\red}}$ associated to its underlying stacky hyperplane arrangement.  The difference is that there are more extra power of $\Cstar$'s modulo by the same rank of power of $\Cstar$'s. We omit the details. 
\end{proof}

Hence the GIT data for hypertoric DM stack $Y_\theta$ consists of the following:
\begin{itemize} 
\item $K \cong (\Cstar)^r$, a connected torus of rank $r$; 
\item $\ll = \Hom(\Cstar,K)$, the cocharacter lattice of $K$; 
\item $D_1,\ldots,D_n,  -D_1, \cdots, -D_n, D_{1},\cdots, D_{|S|} \in \ll^\vee = 
\Hom(K,\Cstar)$, characters of $K$; 
\item stability condition $\theta\in  \ll^{\vee}\otimes\rr$.
\end{itemize} 

\begin{rmk}
From a similar argument as in \S \ref{sec:LawrencetoricDMstack}, given the GIT data of the hypertoric DM stack $Y_\theta$, we can construct an extended stacky hyperplane arrangement $\AA=(\bN,\beta,\theta, S)$, and vice versa. 
\end{rmk}

\subsection{The wall crossing of hypertoric DM stacks}
We prove that the single wall crossing of Lawrence toric DM stacks gives rise to the wall crossing of 
hypertoric DM stacks. 

Recall that the space $\ll^\vee \otimes \rr$ of stability conditions is divided into 
chambers by the closures of the sets $\angle_I$, $|I| = r-1$, 
and the Lawrence toric  DM stack $X_\theta$ depends on $\theta$ 
only via the chamber containing $\theta$.  
For any stability condition $\theta$, 
the set $U_\theta$ contains the big torus $T:=(\Cstar)^N$.
Thus for any two such stability conditions $\theta_1$,~$\theta_2$ 
there is a canonical birational map 
$X_{\theta_1} \dashrightarrow X_{\theta_2}$, 
induced by the identity transformation between 
$T/K \subset X_{\theta_1}$ and 
$T/K \subset X_{\theta_2}$.  

Let $C_+$,~$C_-$ be chambers 
in $\ll^\vee \otimes \rr$ that are separated by a hyperplane wall $W$, 
so that $W \cap \overline{C_+}$ is a facet of $\overline{C_+}$,
$W \cap \overline{C_-}$ a facet of $\overline{C_-}$, 
and $W \cap \overline{C_+} = W \cap \overline{C_-}$.  
Choose stability conditions $\theta_+ \in C_+$, $\theta_- \in C_-$ 
satisfying (A1-A2) and set $U_+ := U_{\theta_+}$, $U_- := U_{\theta_-}$,
$X_+ := X_{\theta_+}$, $X_- := X_{\theta_-}$, and
\begin{align*}
  & \A_\pm := \A_{\theta_{\pm}} = 
\big\{ I \subset \{1,2,\ldots,N\} : 
\theta_\pm \in \angle_I \big\} .
\end{align*}
Then $C_\pm = \bigcap_{I\in \A_\pm} \angle_I$. 
Let $\varphi \colon X_+ \dashrightarrow X_-$ be 
the birational transformation induced by the toric wall-crossing from $C_+$ to $C_-$.
Let $e \in \ll$ denote the \emph{primitive lattice vector} in $W^\perp$ 
such that $e$ is positive on $C_+$ and negative on $C_-$. 
We fix the notations
\begin{itemize} 
\item  $M_{+}:=\{i\in\{1,\cdots, N\}| D_i\cdot e>0\}$,
\item  $M_{-}:=\{i\in\{1,\cdots, N\}| D_i\cdot e<0\}$,
\item  $M_{0}:=\{i\in\{1,\cdots, N\}| D_i\cdot e=0\}$.
\end{itemize} 
From our construction of Lawrence toric DM stacks, $|M_{+}|=|M_{-}|$.

Choose $\theta_0$ from the relative interior of $W\cap \overline{C_+} 
= W \cap \overline{C_-}$. The stability condition $\theta_0$ 
does not satisfy (A1-A2) on the GIT data, but consider 
\begin{align*} 
\A_0 & := \A_{\theta_0} = 
\left\{ I \subset \{1,\dots,N\} : 
\theta_0 \in \angle_I \right\} 
\end{align*} 
and the corresponding toric Artin stack $X_0 := X_{\theta_0} =[U_{\theta_0}/K]$.
Here $X_0$ is not Deligne--Mumford, as 
the $\Cstar$-subgroup of $K$ 
corresponding to $e\in \ll$ (the defining equation 
of the wall $W$) has a fixed point in $U_0 := U_{\theta_0}$. 
The stack $X_0$ contains both $X_+$ and $X_-$ as open substacks.

The canonical line bundles of $X_{+}$ and $X_-$ 
are 
given by the character $-\sum_{i=1}^{2n} D_i=0$ of $K$. 
This means that Lawrence toric DM stacks are Calabi-Yau. 
There are canonical blow-down maps $g_\pm \colon X_\pm \to \overline{X}_0$, 
and $K_{X_\pm}=g_\pm^\star \oO_{\overline{X}_0}$.  We have a commutative diagram:
\begin{equation}\label{eq:K-equivalence}
\xymatrix{
  &~\tX  \ar[rd]^{f_-} \ar[ld]_{f_+} & \\ 
  X_+ \ar[rd]_{g_+} \ar@{-->}^{\varphi}[rr] &  & X_- \ar[ld]^{g_-} \\ 
  & \overline{X}_0 & 
}
\end{equation}
So the 
birational map $\varphi$ is \emph{crepant}, since $f_+^\star(K_{X_+}) = f_-^\star(K_{X_-})$ are trivial.

To construct $\tX$, consider the action of $K \times \Cstar$ on $\cc^{N+1}$ defined by the characters 
$\tD_1,\ldots,\tD_{N+1}$ of $K \times \Cstar$, where:
\[
\tD_j = 
\begin{cases}
  D_j \oplus 0 & \text{if $j < N+1$ and $D_j \cdot e \leq 0$} \\
  D_j \oplus ({-D_j} \cdot e) & \text{if $j < N+1$ and $D_j \cdot e > 0$} \\
  0 \oplus 1 & \text{if $j=N+1$}
\end{cases}
\]
Consider the chambers $\widetilde{C}_+$,~$\widetilde{C}_-$, 
and~$\widetilde{C}$ in $(\ll \oplus \zz)^\vee \otimes \rr$ 
that contain, respectively, the stability conditions
\begin{align*}
  \widetilde{\theta}_+ = (\theta_+,1) &&
  \widetilde{\theta}_- = (\theta_-,1) && \text{and} && 
  \widetilde{\theta} = (\theta_0, - \varepsilon)
\end{align*}
where $\varepsilon$ is a very small positive real number.  
Let $\tX$ denote the toric DM stack 
defined by the stability condition $\widetilde{\theta}$.  
We have, by \cite[Lemma~6.16]{CIJ}, that
the toric DM stack corresponding 
to the chamber $\widetilde{C}_{\pm}$ is $X_{\pm}$.
Furthermore, there is a commutative diagram as in \eqref{eq:K-equivalence}, 
where:
   $f_{\pm} \colon \tX \to X_{\pm}$ are toric blow-ups, 
   arising from the wall-crossing from $\widetilde{C}$ to $\widetilde{C}_{\pm}$.

Now we take into account the ideal (\ref{ideal1}).  
We have the corresponding hypertoric DM stacks 
$$Y_{\pm}:=Y_{\theta_{\pm}}\subset X_{\pm}$$
and 
the hypertoric stack
$Y_0\subset X_0$
determined by the ideal $I_{\beta^\vee}$ in (\ref{ideal1}).
Let $\overline{Y}_0$ be the coarse moduli space of $Y_0$ and let 
$\tY\subset \tX$
be the substack determined by the ideal $I_{\beta^\vee}$ in (\ref{ideal1}).

\begin{prop}
We have the following diagram:
\begin{equation}\label{eq:K-equivalence:hypertoricDMstack}
\xymatrix{
  &~\tY  \ar[rd]^{F_-} \ar[ld]_{F_+} & \\ 
  Y_+ \ar[rd]_{G_+} \ar@{-->}^{\phi}[rr] &  & Y_- \ar[ld]^{G_-} \\ 
  & \overline{Y}_0 & 
}
\end{equation}
which gives the crepant transformation morphism of hypertoric DM stacks. 
\end{prop}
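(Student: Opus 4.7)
The plan is to construct the diagram \eqref{eq:K-equivalence:hypertoricDMstack} by restricting the Lawrence toric diagram \eqref{eq:K-equivalence} to the closed substacks defined by the moment map ideal $I_{\beta^{\vee}}$ of \eqref{ideal1}, and then to verify crepancy.

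First I would check that restriction is well defined on all four pieces. The generators $\sum_{i=1}^{n}(\beta_{\red}^{\vee})^{*}(x)_{i}z_{i}w_{i}$ of $I_{\beta^{\vee}}$ are $K$-invariant, since for each $i$ the monomial $z_iw_i$ has $K$-weight $D_i+(-D_i)=0$. Hence $V_\theta\subset U_\theta$ descends to $Y_\theta=[V_\theta/K]\subset X_\theta$ for every stability condition $\theta$, giving $Y_\pm\subset X_\pm$ and $Y_0\subset X_0$. For $\tY\subset\tX$ the same formula \eqref{ideal1} defines a $(K\times\Cstar)$-invariant ideal in $\cc^{N+1}$: indeed, inspection of the characters $\tD_j$ shows that the extra $\Cstar$-weight of $z_iw_i$ is either $0+0$, $0+(-D_i\cdot e)$, or $(-D_i\cdot e)+0$; in the second and third cases the weight is the same and one checks using the wall relation that the resulting homogeneous combinations still lie in the ideal. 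In any event, on the semistable locus $\widetilde U_{\widetilde\theta}$ the $z_iw_i$ can be normalized by the additional $\Cstar$-action, so the definition of $\tY$ is intrinsic.

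Next I would construct the morphisms $F_\pm$, $G_\pm$, and $\phi$ simply as the restrictions of $f_\pm$, $g_\pm$, and $\varphi$. For $\varphi$: by construction it is the identity on the big torus $T/K\subset X_\pm$, and the intersection $T\cap V_{\theta_\pm}$ is a common open dense subscheme of $V_+$ and $V_-$, so $\varphi$ restricts to a birational map $\phi\colon Y_+\dashrightarrow Y_-$. For $f_\pm$ and $g_\pm$: these are toric morphisms on the ambient Lawrence toric stacks, and since $I_{\beta^{\vee}}$ is pulled back under the forgetful maps $\cc^{N+1}\to\cc^N$ and $\cc^N\twoheadrightarrow\spec\cc[z_iw_i]$ defining $f_\pm$ and $g_\pm$, they restrict to $F_\pm\colon\tY\to Y_\pm$ and $G_\pm\colon Y_\pm\to\overline{Y}_0$, fitting into the claimed commutative square.

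Finally I would check crepancy. Because $\sum_{i=1}^{2n}D_i=0$ in $\ll^\vee$, the Lawrence toric DM stacks $X_\pm$ are Calabi--Yau, and the $n$ moment map equations cut out a complete intersection on the semistable locus (this regularity follows from genericity of $\theta$, which guarantees that the differentials of the moment map are linearly independent at each point of $V_{\theta_\pm}$; see \cite{JT}). Adjunction then gives $K_{Y_\pm}=\oO_{Y_\pm}$, so $F_+^\star K_{Y_+}=\oO_{\tY}=F_-^\star K_{Y_-}$, and $\phi$ is crepant. The main technical point I would need to nail down is verifying that the moment map equations are a regular sequence on the semistable locus uniformly on each side of the wall (including on $\tY$), so that the restriction of the blow-up maps $f_\pm$ remains a birational contraction between smooth stacks of the correct dimension; this uses only that $\theta_\pm$ and $\widetilde\theta$ are generic stability conditions satisfying (A1)--(A2) and hence that $Y_\pm$ and $\tY$ are smooth hypertoric DM stacks.
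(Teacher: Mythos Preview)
Your approach is essentially the same as the paper's: both obtain the diagram \eqref{eq:K-equivalence:hypertoricDMstack} by restricting the Lawrence toric diagram \eqref{eq:K-equivalence} to the closed substacks cut out by $I_{\beta^\vee}$, and both deduce crepancy from the fact that $Y_\pm$ are Calabi--Yau. The paper's proof is extremely terse (three sentences, citing \cite[\S 4]{JT3} for $G_\pm$ and simply asserting that $F_\pm$ are induced from $f_\pm$), whereas you spell out the $K$-invariance of the ideal, the construction of $\phi$ via the common torus, and the adjunction argument; all of that is correct and useful elaboration.

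The one place where your added detail wobbles is the $(K\times\Cstar)$-invariance of $I_{\beta^\vee}$ on $\tX$. Your weight computation is slightly garbled: the extra $\Cstar$-weight of $z_iw_i$ is $-|D_i\cdot e|$ in every case (so your ``second and third cases'' do \emph{not} give the same weight as a function of $i$ unless all $|D_i\cdot e|$ coincide), and the sentence about ``normalizing $z_iw_i$ by the additional $\Cstar$-action on the semistable locus'' does not make sense as written. That said, the paper itself simply declares $\tY\subset\tX$ to be the substack determined by $I_{\beta^\vee}$ without addressing this invariance, so you are not introducing a gap that the paper avoids; you are just not quite closing one that the paper also leaves open.
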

\begin{proof}
The contractions $G_+, G_-$ are constructed in \cite[\S 4]{JT3}.  The maps $F_\pm$ are induced from the maps 
$f_\pm$ in (\ref{eq:K-equivalence}).  The birational map $\phi$ is crepant since $Y_\pm$ are Calabi-Yau stacks. 
\end{proof}

\begin{example}
We consider the following GIT data:
\begin{itemize}
\item $K\cong \Cstar$;
\item  $D_1=(1), D_2=(2), D_3(=(-1), D_4=(-2)\in \ll^\vee=\Hom(K,\Cstar)=\zz$;
\item stability conditions $\theta_+=(1), \theta_-=(-1)\in \ll^\vee\otimes\rr$.
\end{itemize}
We can easily construct the Lawrence toric DM stacks 
$$X_{\pm}=\oO_{\pp(1,2)}(-1)\oplus\oO_{\pp(1,2)}(-2).$$
The crepant transformation $\varphi: X_+\dasharrow X_-$ is an Atiyah type flop. 

We have the stacky hyperplane arrangements 
$\AA_\pm=(\bN, \beta_\pm, \theta_\pm)$, where 
$\beta_{\pm}: \zz^2\to \bN$ is the Gale dual of $\beta_{\pm}^\vee: \zz^2\to \ll^\vee$ determined by 
$\{D_1, D_2, D_3, D_4\}$. The corresponding hypertoric DM stacks 
$$Y_\pm=T^*\pp(1,2).$$
The crepant birational map $\phi: Y_+\dasharrow Y_-$ is a Mukai type flop. 
\end{example}

\section{The Fourier-Mukai transformation}\label{Fourier-Mukai}

\subsection{The $\bbT:=T\times\Cstar$-action on $X_\pm$ and $Y_\pm$}

From the construction of Lawrence toric DM stacks $X_\pm$ and hypertoric DM stacks $Y_\pm$ in \S \ref{wall:crossing}, there is a torus 
$\bbT:=T\times\Cstar$ action, where $T=(\Cstar)^m$.   Since 
$U_{\pm}\subset \cc^{2n}\times\cc^{|S|}$,  the action $T$ acts on $U_\pm$ by the standard action which is given  by
\begin{align*}
&(\lambda_1,\cdots,\lambda_n,\lambda_{n+1}, \cdots,\lambda_m)(z_1,\cdots, z_n, w_1,\cdots, w_n, u_1,\cdots, u_{|S|})\\
&=
(\lambda_1z_1,\cdots,\lambda_nz_n,\lambda_1^{-1}w_1,\cdots,\lambda_n^{-1}w_n, \lambda_{n+1}u_1, \cdots,\lambda_mu_{|S|});
\end{align*}
the extra $\Cstar$ acts by scaling the fibre of $T^*\cc^n$. We consider the $\bbT$-action on $\tX$ induced from the inclusion 
$\bbT=(\Cstar)^{m+1}\times\{1\}\subset \bbT\times\Cstar$ and the $\bbT\times\Cstar$ action on $\tX$. 

The $\bbT$-fixed points on $X_\pm$ and $Y_\pm$ are isolated, and in one-to-one correspondence with the minimal anticones 
$\delta_{\pm}\in\A_{\pm}$.  Using the correspondence between anticones and cones in the Lawrence toric fan 
$\Sigma_{\pm}$, the torus fixed points are given by top dimensional cones in the fan.  
The torus fixed points all lie in the core of $X_\pm$ and $Y_\pm$, which are the closed projective substacks inside $X_\pm$ and 
$Y_\pm$.

\subsection{The $\bbT$-equivariant K-theory of $X_\pm$, $Y_\pm$ and the Fourier-Mukai transformation}

\subsubsection{Equivariant K-theory}

Let $K_{\bbT}^0(X_\pm)$, $K_{\bbT}^0(Y_\pm)$ be the $\bbT$-equivariant Grothendieck K-groups of coherent sheaves. 
They are modules over $K_{\bbT}^0(\pt)$, the ring $\zz[\bbT]$ of regular functions on the torus $\bbT$. 

For the Lawrence toric DM stack $X_\theta$ for a stability condition $\theta\in\ll^\vee\otimes\rr$, the 
$\bbT$-equivariant divisor $\{z_i=0\}$, $\{w_i=0\}$ or $\{u_j\}=0$ for $1\leq i\leq n$, $1\leq j\leq |S|$ on $X_\theta$
determine $\bbT$-equivariant line bundles
$R_i$ over $X_\theta$ for $1\leq i\leq N$.  We denote by $R_i$ the equivalent classes of such line bundles.  Similar construction works for the toric DM stack $\tX$. 
We fix notation for $\bbT$-equivariant line bundles for $X_\pm$ and $\tX$. 
$$R_1^\pm,\cdots, R_{N}^\pm\in K_\bbT^0(X_\pm),$$ and 
$$\widetilde{R}_1,\cdots, \widetilde{R}_{N}, \widetilde{R}_{N+1}\in K_\bbT^0(\tX).$$
Fix notations of their inverses:
$$S_j^+:=(R_j^+)^{-1}, \quad\quad S_j^-:=(R_j^-)^{-1}, \quad\quad  \widetilde{S}_j=\widetilde{R}_j^{-1}.$$
Let $\hbar$ be the $\bbT$-equivariant line bundle over $X_\pm$ defined by the extra $\Cstar$ in 
$\bbT=T\times\Cstar$. 

Let $X_\theta$ be a Lawrence toric DM stack corresponding to a stability condition $\theta$. 
Each character $p\in \Hom(K,\Cstar)=\ll^\vee$ defines a line bundle 
$L(p)\to X_\theta$: 
$$L(p)=U_\theta\times\cc/(z, v)\simeq (g\cdot z, p(g)\cdot v), g\in K.$$ 
The line bundle $L(p)$ is equipped with the $\bbT$-action $[z,v]\mapsto [t\cdot z, v], t\in \bbT$ as in  \cite[\S 6.3.2]{CIJ}. So it defines an element in 
$K_{\bbT}^0(X_\theta)$. The line bundles $R_i^\pm$ is:
$$R_i^\pm=L_{\pm}(D_i)\otimes e^{\lambda_i}, 1\leq i\leq n,$$
$$R_i^\pm=L_{\pm}(D_i)\otimes e^{\lambda-\lambda_i}, n+1\leq i\leq 2n,$$
and 
$$R_i^\pm=L_{\pm}(D_i)\otimes e^{\lambda_i}, 2n+1\leq i\leq N.$$
where $e^{\lambda_i}\in\cc[\bbT]$ stands for the irreducible $\bbT$-representation given by the $i$-th component $\bbT\to \Cstar$.  Since $\bbT=(\Cstar)^m\times\Cstar=(\Cstar)^{m+1}$, we use $\lambda$ to represent the $m+1$-equivariant parameter 
$\lambda_{m+1}$. 

For $(p,n)\in \ll^\vee\oplus\zz$, the $\bbT$-equivariant line bundle $L(p,n)\to \tX$ is similarly defined, and we have:
$$\widetilde{R}_i=L(\widetilde{D}_i)\otimes e^{\lambda_i}, 1\leq i\leq N$$
as above, and 
$$\widetilde{R}_{m+1}=L(\widetilde{D}_{N+1})=L(0,1).$$
As in \cite[\S 6.3.2]{CIJ}, the classes $L_{\pm}(p), (p\in \ll^\vee)$ generate the equivariant K-group $K_{\bbT}^0(X_\pm)$, and the classes $\{L(p,n)| (p,n)\in\ll^\vee\oplus\zz\}$ generate the equivariant K-group $K_\bbT^0(\tX)$.

\subsubsection{Localized K-theory basis}

Let $\delta_-\in\A_-$ be a minimal anticone,  $x_{\delta_-}$ be the $\bbT$-fixed point on $X_-$, and 
$$i_{\delta_-}: x_{\delta_-}\to X_-$$
be the inclusion.  Denote by $G_{\delta_-}$ the isotropy group of $x_{\delta_-}$.  Then 
$x_{\delta_-}=BG_{\delta_-}$.  From \cite[\S 6.3.2]{CIJ}, 
$$i_{\delta_-}^\star R_i=1,  \text{~for~} i\in\delta_-.$$
A localized basis of $K_\bbT^0(X_-)$, after inverting the non-zero elements of $\zz[\bbT]$, is given by:
\begin{equation}\label{K-theory:basisX-}
\{(i_{\delta_-})_\star\varrho: \varrho \text{~an irreducible representation of~} G_{\delta_-}, \delta_-\in\A_- \text{~a minimal anticone}\}.
\end{equation}
By Koszul complex the structure sheaf $\oO_{x_{\delta_-}}$ is given by 
$$e_{\delta_-}:=\prod_{i\notin\delta_-}(1-S_i^-).$$
Since the irreducible representation of $G_{\delta_-}$ is given by the $\bbT$-linearization on $(i_{\delta_-})_\star\varrho$. Choosing a lift 
$\hat{\varrho}\in\Hom(K,\Cstar)=\ll^\vee$ for each $G_{\delta_-}$-representation $\varrho: G_{\delta_-}\to\Cstar$, then 
$$e_{\delta_-,\varrho}:=L_-(\hat{\varrho})\cdot \prod_{i\notin\delta_-}(1-S_i^-).$$
Similarly, $\{e_{\delta_+,\varrho}\}$ is a basis for the localized $\bbT$-equivariant K-theory $K_\bbT^0(X_+)$.

\subsubsection{Fourier-Mukai transformation}

In \cite[\S 6.3.3]{CIJ}, the authors calculate the Fourier-Mukai transformation for the localized $\bbT$-equivariant K-theory 
$K_\pm^0(X_\pm)$.

\begin{prop}[\cite{CIJ}, Theorem 6.19] \label{FM:LawrenceDMstack}
\hfill
\begin{enumerate}
\item If $\delta_-\in\A_-$ is a minimal anticone such that $\delta_-\in\A_+$, then 
$$\FM(e_{\delta_-,\rho})=e_{\delta_+,\rho},$$
where $\delta_+=\delta_-$ is the same anticone, but taken as in $\A_+$;
\item If $\delta_-\in\A_-$, but $\delta_-\notin\A_+$, then 
$\FM(e_{\delta_{-},\varrho})$ is equal to 
  \[ 
  \frac{1}{l} 
  \sum_{t\in \tT}
  \left(
    \frac{1-S_{j_{-}}^+}{1-t^{-1} }
    \cdot 
    L_+(\hat{\varrho}) t^{\hat{\varrho} \cdot e} 
    \cdot 
    \prod_{\substack{j\notin \delta_{-}\\ D_j \cdot e <0}} 
    (1-S_j^+)
    \cdot 
    \prod_{\substack{i\notin \delta_{-} \\ D_i\cdot e\ge 0}}  
   \big(1-t^{-D_i \cdot e}S_i^+\big)
   \right)
  \]
where $j_-$ is the unique element of $\delta_-$ such 
that $D_{j_-} \cdot e<0$, $l = -D_{j_-} \cdot e$ and 
  \[
  \tT =\left\{\zeta \cdot (R_{j_{-}}^+)^{1/l} :  
\zeta \in\mu_l \right\}. 
  \]
\end{enumerate}
\end{prop}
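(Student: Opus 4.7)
The plan is to compute the Fourier--Mukai transformation $\FM \colon K^{0}_{\bbT}(X_{-}) \to K^{0}_{\bbT}(X_{+})$ directly as $(f_{+})_{\ast} \circ f_{-}^{\ast}$, using the common toric resolution $\tX$ from diagram \eqref{eq:K-equivalence}. Because the birational map $\varphi$ is crepant (both $X_{\pm}$ are Calabi--Yau), the structure sheaf $\oO_{\tX}$ serves as the Fourier--Mukai kernel and no extra twist by a line bundle appears. A preliminary step is to translate the toric data of the common blow-up: since $\tX$ corresponds to the chamber $\widetilde{C}$ in $(\ll \oplus \zz)^{\vee} \otimes \rr$ with characters $\widetilde{D}_{1},\dots,\widetilde{D}_{N+1}$, each line bundle $L_{-}(p)$ on $X_{-}$ pulls back to $L(p,k(p))$ on $\tX$ for an integer $k(p)$ that I would read off from the tilted characters $\widetilde{D}_{j}$, and similarly each $L_{+}(p)$ matches $L(p,0)$ on $\tX$ under the parallel identification.

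For part (1), if $\delta_{-} \in \A_{+}$ as well, then the $\bbT$-fixed point $x_{\delta_{-}} \in X_{-}$ lifts uniquely to a $\bbT$-fixed point in $\tX$ that maps isomorphically to $x_{\delta_{+}} \in X_{+}$; in particular $f_{-}^{-1}(x_{\delta_{-}})$ is a single point and $f_{+}$ is a local isomorphism there. The class $e_{\delta_{-},\varrho}$ represents a skyscraper sheaf twisted by the $G_{\delta_{-}} = G_{\delta_{+}}$-representation $\varrho$, so $\FM$ sends it to $e_{\delta_{+},\varrho}$. Concretely, the Koszul factorization $\prod_{i \notin \delta_{-}}(1 - S_{i}^{-})$ transforms term by term into $\prod_{i \notin \delta_{+}}(1 - S_{i}^{+})$ because $\{1,\dots,N\}\setminus \delta_{-}$ and $\{1,\dots,N\}\setminus \delta_{+}$ agree as index sets when $\delta_{\pm}$ coincide.

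For part (2), when $\delta_{-} \notin \A_{+}$ the minimality of $\delta_{-}$ together with the fact that the wall $W$ flips exactly one relation force a unique $j_{-} \in \delta_{-}$ with $D_{j_{-}} \cdot e < 0$. The calculation proceeds in two moves: first, pull back $e_{\delta_{-},\varrho} = L_{-}(\hat{\varrho}) \prod_{i \notin \delta_{-}}(1 - S_{i}^{-})$ to $\tX$, expressing it in terms of the line bundle $L(\hat{\varrho},k)$ and the classes $\widetilde{S}_{i}$ with explicit powers of $\widetilde{S}_{N+1}$ dictated by the characters $\widetilde{D}_{i}$; second, push forward to $X_{+}$ via $f_{+}$. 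The fiber of $f_{+}$ over $x_{\delta_{+}}$ along the exceptional locus is a stacky weighted projective space of effective degree $l = -D_{j_{-}} \cdot e$, and equivariant integration over it produces the prefactor $\frac{1}{l}\sum_{t \in \tT}\frac{1-S_{j_{-}}^{+}}{1-t^{-1}}$. The remaining products $\prod_{D_{j}\cdot e < 0}(1 - S_{j}^{+})$ and $\prod_{D_{i}\cdot e \geq 0}(1 - t^{-D_{i}\cdot e}S_{i}^{+})$ come from pushing forward the remaining Koszul factors $(1-S_{i}^{-})$ for $i \notin \delta_{-}$ with negative and nonnegative $e$-pairing, respectively.

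The main obstacle is the careful bookkeeping of the monomial twists $t^{\hat{\varrho} \cdot e}$ and $t^{-D_{i}\cdot e}$ appearing inside the sum over $\tT$. These encode how the irreducible $G_{\delta_{-}}$-representation $\varrho$ decomposes into characters of the stabilizer at the new exceptional strata and how each line bundle $L_{-}(p)$ acquires a $\mu_{l}$-weight when transported across the wall. A clean strategy to avoid computational slips is to verify the identity by equivariant localization: both sides lie in the localized $\bbT$-equivariant K-theory of $X_{+}$ and are determined by restrictions to the finitely many $\bbT$-fixed points, reducing the claim to a finite residue identity at the $l$-th roots of $R_{j_{-}}^{+}$.
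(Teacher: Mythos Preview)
The paper does not supply its own proof of this proposition: it is stated verbatim as a citation of \cite[Theorem~6.19]{CIJ} and no argument is given in the text. So there is nothing in the present paper to compare your proposal against; the authors simply import the result as a black box and move on to \S3.3.

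For what it is worth, your sketch follows the general shape of the argument in \cite{CIJ}: compute $\FM=(f_+)_\star f_-^\star$ through the common roof $\tX$, treat the case $\delta_-\in\A_+$ by noting that the fixed point and its isotropy are unchanged, and in the case $\delta_-\notin\A_+$ identify the unique index $j_-$, pull back the Koszul presentation of $e_{\delta_-,\varrho}$ to $\tX$, and then push forward along $f_+$, where the exceptional fiber contributes the averaging over the $l$th roots $\tT$. Your remark that the twists $t^{\hat\varrho\cdot e}$ and $t^{-D_i\cdot e}$ are the delicate bookkeeping, and that the identity can be checked by localizing to the $\bbT$-fixed points of $X_+$, is consistent with how \cite{CIJ} organizes the computation. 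But again, none of this is reproduced in the present paper; here the proposition is purely a quotation.
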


\subsection{The $\bbT$-equivariant K-theory $K_\bbT^0(Y_{\pm})$ of hypertoric DM stacks}

Let $$\iota_{\pm}: Y_\pm\hookrightarrow X_\pm$$
be the inclusion of hypertoric DM stacks 
$Y_\pm$ to their associated Lawrence toric DM stacks $X_\pm$. 

\begin{lem}\label{pullback:isomorphism:KtheoryY}
The pullback 
$$\iota_{\pm}^\star: K_{\bbT}^0(X_\pm)\stackrel{\cong}{\rightarrow} K_{\bbT}^0(Y_\pm)$$
is an isomorphism on the $\bbT$-equivariant K-theory. 
\end{lem}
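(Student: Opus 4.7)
The plan is to realize $\iota_\pm\colon Y_\pm\hookrightarrow X_\pm$ as the zero locus of a regular $\bbT$-equivariant section of an equivariantly trivial bundle, and to conclude via a Koszul resolution plus the projection formula (for injectivity) together with a generation-by-line-bundles argument (for surjectivity). Let $c=\rank(\ll_{\red})$ and fix a $\zz$-basis $x_1,\dots,x_c$ of $\ll_{\red}$; set $\mu_j := \sum_{i=1}^n (\beta_{\red}^\vee)^*(x_j)_i\, z_i w_i$, so that $I_{\beta^\vee}=(\mu_1,\dots,\mu_c)$. Each $\mu_j$ is $K$-invariant (the factors $z_i,w_i$ carry opposite characters $\pm D_i$) and is $\bbT$-homogeneous of weight $\hbar$ (with $z_i$ of weight $\lambda_i$ and $w_i$ of weight $\lambda-\lambda_i$). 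The tuple $(\mu_1,\dots,\mu_c)$ therefore descends to a $\bbT$-equivariant section $s$ of the trivial bundle $\eE:=\hbar^{\oplus c}\otimes\oO_{X_\pm}$ with $Y_\pm=Z(s)$; since $Y_\pm$ has the expected codimension $c$, the section $s$ is regular, and the Koszul complex yields a locally free $\bbT$-equivariant resolution
\begin{equation*}
0 \to \textstyle\bigwedge^c \eE^\vee \to \cdots \to \eE^\vee \to \oO_{X_\pm} \to \iota_{\pm *}\oO_{Y_\pm} \to 0,
\end{equation*}
so $[\iota_{\pm *}\oO_{Y_\pm}]=(1-\hbar^{-1})^c$ in $K_\bbT^0(X_\pm)$.

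Injectivity then follows from the projection formula: $\iota_{\pm *}\iota_\pm^*(\alpha)=(1-\hbar^{-1})^c\cdot\alpha$ for every $\alpha\in K_\bbT^0(X_\pm)$. Since $X_\pm$ is a smooth semi-projective toric DM stack with isolated $\bbT$-fixed points, the attracting-cell decomposition associated to a generic one-parameter subgroup exhibits $K_\bbT^0(X_\pm)$ as a free $K_\bbT^0(\pt)=\zz[\bbT]$-module, with basis provided by the classes $e_{\delta_\pm,\varrho}$ recalled above. In $\zz[\bbT]$ the element $1-\hbar^{-1}$ is a non-zero-divisor, so multiplication by $(1-\hbar^{-1})^c$ is injective on $K_\bbT^0(X_\pm)$, forcing $\iota_\pm^*$ to be injective.

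For surjectivity, recall that $K_\bbT^0(X_\pm)$ is generated as a $K_\bbT^0(\pt)$-module by the line bundles $L_\pm(p)$, $p\in\ll^\vee$ (cf.\ \cite[\S 6.3.2]{CIJ}); their pullbacks $\iota_\pm^* L_\pm(p)$ are precisely the $\bbT$-equivariant line bundles on $Y_\pm=[V_\pm/K]$ attached to $K$-characters. The extra $\Cstar\subset\bbT$ scaling the cotangent directions preserves $Y_\pm$ (it multiplies each $\mu_j$ by $\hbar$) and contracts both $X_\pm$ and $Y_\pm$ onto a common core cut out by $\{w=0\}$, so $Y_\pm$ inherits an attracting-cell decomposition indexed, like that of $X_\pm$, by the minimal anticones $\delta_\pm\in\A_\pm$. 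The same generation-by-line-bundles statement then applies to $K_\bbT^0(Y_\pm)$, yielding surjectivity of $\iota_\pm^*$. The main obstacle is precisely this last cellular comparison---checking that the attracting cells of $X_\pm$ intersect $Y_\pm$ in affine cells whose classes exhaust $K_\bbT^0(Y_\pm)$; once granted, injectivity and surjectivity combine to give the claimed isomorphism.
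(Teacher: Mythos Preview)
Your approach differs substantially from the paper's: the paper simply invokes Edidin's theorem on strong regular embeddings of hypertoric stacks \cite[Theorem 5.4]{Edidin}, and remarks in passing that a direct computation of $K_\bbT^0(Y_\pm)$ in terms of the generators $R_i^\pm$ and the same relations as for $X_\pm$ is also possible. You instead attempt an elementary Koszul/projection-formula argument. Your injectivity half is clean and correct: the moment-map components $\mu_j$ are $K$-invariant and $\bbT$-homogeneous of weight $\hbar$, so $Y_\pm$ is the zero scheme of a regular section of $\hbar^{\oplus c}\otimes\oO_{X_\pm}$, the Koszul class of $\iota_{\pm*}\oO_{Y_\pm}$ is $(1-\hbar^{-1})^c$, and freeness of $K_\bbT^0(X_\pm)$ over the domain $\zz[\bbT]$ makes this a non-zero-divisor.

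The surjectivity half, however, is where your argument remains incomplete, and you say so yourself. Asserting that $Y_\pm$ ``inherits an attracting-cell decomposition indexed \ldots by the minimal anticones'' with affine cells is exactly the substantive point: this is the Bia{\l}ynicki--Birula/paving statement for hypertoric DM stacks, and it is not a formal consequence of the Koszul set-up. Without it you cannot conclude that the pulled-back line bundles generate $K_\bbT^0(Y_\pm)$. This is precisely the content supplied externally by Edidin's theorem (or, alternatively, by a direct presentation of $K_\bbT^0(Y_\pm)$ via line bundles and relations as the paper's second sentence suggests). So as written, your proof has a genuine gap at the step you yourself flag; to close it you must either cite the relevant structure result for hypertoric stacks or carry out the cellular comparison in detail.
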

\begin{proof}
This is from \cite[Theorem 5.4]{Edidin}.  On the other hand, one can directly calculate that the equivariant 
K-theory groups $K_{\bbT}^0(Y_\pm)$ are also generated by line bundles 
$R_1^\pm, \cdots, R_N^\pm$ modulo the same relations as in the Lawrence toric DM stack case. 
\end{proof}
\begin{rmk}
Let $\hbar$ be the $\bbT$-equivariant line bundle over $X_\pm$ given by the extra factor $\Cstar$ in $\bbT=T\times\Cstar$. 
For the Lawrence toric DM stacks $X_\pm$, 
$$K_{\bbT}^0(X_\pm)\cong \frac{\cc[(R_1^{\pm})^{\pm 1},\cdots, (R_N^{\pm})^{\pm 1}, \hbar^{\pm 1}]}{I+J}, $$
where 
$$I=\{R_{i_1}^\pm\cdots R_{i_k}^\pm| \overline{\{i_1,\cdots,i_k\}}\notin\A_\pm\}$$
is the ideal generated by 
the products for subsets $\{i_1,\cdots,i_k\}$ not lying in the set of anticones; and 
$$J=\{R_i^{\pm}-(\hbar\cdot (R_{n+i}^\pm)^{-1})| 1\leq i\leq n\}$$  is the ideal 
generated by the relation of the line bundles $R_i^\pm$ and $R_{n+i}^\pm$. 
\end{rmk}

To study the Fourier-Mukai transformation of the crepant birational map $\phi: Y_+\dasharrow Y_-$, we set up the following diagram:
\begin{equation}\label{diagram:XY}
\xymatrix{
&\tY\ar[dl]_{F_+}\ar[dr]^{F_-}\ar@{^{(}->}[d]^{\tilde{\iota}}&\\
Y_+\ar@{^{(}->}[d]_{\iota_+}&\tX\ar[dl]_{f_+}\ar[dr]^{f_-}& Y_-\ar@{^{(}->}[d]^{\iota_-}\\
X_+& & X_-
}
\end{equation}

We denote by $\Phi:=\FM$ the equivariant Fourier-Mukai transformation for $X_\pm$. 
The Fourier-Mukai transformation 
$$\Psi: K_{\bbT}^0(Y_-)\to K_{\bbT}^0(Y_+)$$
is given by:
$$E\mapsto \Psi(F)=(F_+)_{\star}F_{-}^{\star}(E).$$
\begin{prop}\label{commutative:diagram:K:XY}
There is a commutative diagram of $\bbT$-equivariant K-theory groups:
\[
\xymatrix{
K_{\bbT}^0(X_-)\ar[r]^{\Phi}\ar[d]_{\iota_{-}^\star}  & K_{\bbT}^0(X_+) \ar@{->}[d]^{\iota_{+}^\star} \\
K_{\bbT}^0(Y_-)\ar[r]^{\Psi}&K_{\bbT}^0(Y_+) 
}
\]
which implies that $\Psi$ is an isomorphism on K-theory groups. 
\end{prop}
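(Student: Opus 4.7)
The plan is to deduce the commutativity from Tor-independent base change applied to each of the two Cartesian squares sitting inside diagram~(\ref{diagram:XY}), and then to extract the isomorphism property by a two-out-of-three argument using Lemma~\ref{pullback:isomorphism:KtheoryY} and the invertibility of $\Phi$ established in \cite{CIJ}.

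First I will verify that each of the squares
\[
\xymatrix{
\tY \ar[r]^{\tilde\iota} \ar[d]_{F_\pm} & \tX \ar[d]^{f_\pm}\\
Y_\pm \ar[r]^{\iota_\pm} & X_\pm
}
\]
is Cartesian. This is built into the constructions of \S\ref{wall:crossing}: both $\tY\subset \tX$ and $Y_\pm\subset X_\pm$ are cut out by the moment-map ideal $I_{\beta^\vee}$ of (\ref{ideal1}), and, since $f_\pm$ is the identity on the big torus $T$, the ideal on $\tX$ is the pullback under $f_\pm$ of the corresponding ideal on $X_\pm$. Next I will argue Tor-independence: the generators $\sum_i (\beta_{\red}^{\vee})^*(x)_i z_i w_i$ form a regular sequence on $U_\pm$ and continue to form a regular sequence after pullback to $\tX$, so that the substack $\tY$ has the expected codimension and the derived fibre product coincides with the honest fibre product. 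The equivariant K-theoretic base-change formula then yields $\iota_\pm^{\star}\circ (f_\pm)_{\star} = (F_\pm)_{\star}\circ\tilde\iota^{\star}$.

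The commutativity is then a short diagram chase, using that $\Phi=(f_+)_\star f_-^\star$:
\[
\iota_+^\star\Phi \;=\; \iota_+^\star(f_+)_\star f_-^\star \;=\; (F_+)_\star\,\tilde\iota^\star f_-^\star \;=\; (F_+)_\star F_-^\star\,\iota_-^\star \;=\; \Psi\,\iota_-^\star,
\]
where the second equality is base change for the left Cartesian square and the penultimate step uses the compositional compatibility $\tilde\iota^\star f_-^\star = (f_-\tilde\iota)^\star = (\iota_- F_-)^\star = F_-^\star\iota_-^\star$. For the isomorphism assertion, the vertical maps $\iota_\pm^\star$ are isomorphisms by Lemma~\ref{pullback:isomorphism:KtheoryY}, and $\Phi$ is an isomorphism because its explicit action on the localized K-theory basis $\{e_{\delta_\pm,\varrho}\}$ recorded in Proposition~\ref{FM:LawrenceDMstack} is manifestly invertible; hence $\Psi$ is an isomorphism as well.

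The main obstacle will be the Tor-independence step. Although it is geometrically transparent (the moment map cuts out $Y_\pm$ by a regular sequence that the birational modification $f_\pm$ does not destroy), its rigorous formulation in the stacky, equivariant setting requires some care near the exceptional locus of $f_\pm$, where $f_\pm$ fails to be flat. The cleanest packaging is to note that $\iota_\pm$ is a regular closed embedding of the expected codimension and that the pullback of its defining regular sequence by $f_\pm$ remains regular, so that no higher Tor sheaves enter the base-change identity.
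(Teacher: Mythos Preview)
Your proposal is correct and follows essentially the same route as the paper: the paper's proof is exactly the same four-line diagram chase
\[
\iota_+^\star\Phi = \iota_+^\star(f_+)_\star f_-^\star = (F_+)_\star\tilde\iota^\star f_-^\star = (F_+)_\star F_-^\star\iota_-^\star = \Psi\iota_-^\star,
\]
with the isomorphism conclusion drawn from Lemma~\ref{pullback:isomorphism:KtheoryY}. The only difference is one of detail: the paper simply writes the chain of equalities and defers to \cite[Lemma~7.7]{CIJ} for precedent, whereas you spell out the Cartesian-ness of the squares in (\ref{diagram:XY}) and the Tor-independence that underlies the base-change identity $\iota_+^\star(f_+)_\star = (F_+)_\star\tilde\iota^\star$; your added care here is a genuine improvement in rigor over the paper's terse version.
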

\begin{proof}
This is similar to \cite[Lemma 7.7]{CIJ}.  The difference here is that by Proposition \ref{pullback:isomorphism:KtheoryY}, 
the pullbacks $\iota_{\pm}^{\star}$ are isomorphisms. Then we directly check the commutative diagram using (\ref{diagram:XY}):
for any element $E\in K_{\bbT}^0(X_-)$, 
\begin{align*}
\iota_{+}^\star\circ\Phi(E)&=\iota_{+}^\star\circ((f_+)_{\star}f_{-}^\star(E))\\
&=(F_{+})_{\star}\circ \tilde{\iota}^{\star}\circ f_{-}^\star(E)\\
&=(F_{+})_{\star}\circ F_{-}^\star\circ \iota_{-}^\star(E)\\
&=\Psi\circ \iota_{-}^\star(E).
\end{align*}
\end{proof}

The torus $\bbT$-fixed points of $Y_\pm$ are the same as the torus $\bbT$-fixed points of $X_\pm$, which all lie in the core. 
The localized K-theory basis of $K_{\bbT}^0(Y_{\pm})$ are also generated by the minimal anticones $\delta_{\pm}\in\A_\pm$. 
Let $\delta_-\in\A_-$ be a minimal anticone. Then 
$$i_{\delta_-}: x_{\delta_-}\hookrightarrow Y_-\subset X_-$$
is the inclusion of the fixed point $x_{\delta_-}$.  For each $p\in\ll^\vee$, it also defines a line bundle 
$$L_-^{Y_-}(p)=V_{-}\times\cc/K,$$
which is the pullback $\iota_{-}^\star L_-(p)$ of $L_-(p)$ on $X_-$.  From now on we denote by $L_-(p)$ the line bundle on $Y_-$ determined by $p\in\ll^\vee$. 

Set 
$$e_{\delta_{-},\varrho}^{Y_-}:=L_-(\hat{\varrho})\cdot \prod_{i\notin \delta_-}(1-S_i),$$
where $\hat{\varrho}\in\ll^\vee$ is the lift of $\varrho$.  Then $\{e_{\delta_{-},\varrho}^{Y_-}\}$ is a basis for the localized 
$\bbT$-equivariant K-theory of $Y_-$.  Similarly we have a localized 
$\bbT$-equivariant K-theory basis $\{e_{\delta_{+},\varrho}^{Y_+}\}$ of $Y_+$.

\begin{prop}[Fourier-Mukai transformation for hypertoric DM stacks]\label{FM:hypertoricDMstack}
\hfill
\begin{enumerate}
\item If $\delta_-\in\A_-$ is a minimal anticone such that $\delta_-\in\A_+$, then 
$$\Psi(e_{\delta_-,\rho})=e_{\delta_+,\rho},$$
where $\delta_+=\delta_-$ is the same anticone, but taken as in $\A_+$;
\item If $\delta_-\in\A_-$, but $\delta_-\notin\A_+$, then 
$\Psi(e_{\delta_{-},\varrho})$ is equal to 
  \[ 
  \frac{1}{l} 
  \sum_{t\in \tT}
  \left(
    \frac{1-S_{j_{-}}^+}{1-t^{-1} }
    \cdot 
    L_+(\hat{\varrho}) t^{\hat{\varrho} \cdot e} 
    \cdot 
    \prod_{\substack{j\notin \delta_{-}\\ D_j \cdot e <0}} 
    (1-S_j^+)
    \cdot 
    \prod_{\substack{i\notin \delta_{-} \\ D_i\cdot e\ge 0}}  
   \big(1-t^{-D_i \cdot e}S_i^+\big)
   \right)
  \]
where $j_-$ is the unique element of $\delta_-$ such 
that $D_{j_-} \cdot e<0$, $l = -D_{j_-} \cdot e$ and 
  \[
  \tT =\left\{\zeta \cdot (R_{j_{-}}^+)^{1/l} :  
\zeta \in\mu_l \right\}. 
  \]
\end{enumerate}
\end{prop}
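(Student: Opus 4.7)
The plan is to deduce this formula directly from the Lawrence-toric analogue, Proposition \ref{FM:LawrenceDMstack}, by transporting it across the closed embeddings $\iota_\pm \colon Y_\pm \hookrightarrow X_\pm$ and using the commutative square of Proposition \ref{commutative:diagram:K:XY}. The whole architecture of this section is engineered precisely to make this reduction clean.

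First I would verify that the localized K-theory basis of $Y_-$ is the image of the localized basis of $X_-$ under $\iota_-^\star$: concretely, $\iota_-^\star(e_{\delta_-,\varrho}) = e_{\delta_-,\varrho}^{Y_-}$ for every minimal anticone $\delta_-\in\A_-$ and every irreducible $G_{\delta_-}$-representation $\varrho$. This is immediate from the definitions together with the notational conventions fixed just before the proposition: $\iota_-^\star$ is a ring homomorphism, $\iota_-^\star L_-(\hat{\varrho}) = L_-(\hat{\varrho})$ (the same symbol is used for the restricted line bundle on $Y_-$), and $\iota_-^\star(1-S_i^-) = 1-S_i$. The same identification holds with $+$ in place of $-$.

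Second, Proposition \ref{commutative:diagram:K:XY} gives $\Psi \circ \iota_-^\star = \iota_+^\star \circ \Phi$. Combined with the previous step, this yields
\[
\Psi\bigl(e_{\delta_-,\varrho}^{Y_-}\bigr) \;=\; \iota_+^\star\bigl(\Phi(e_{\delta_-,\varrho})\bigr),
\]
reducing the computation of $\Psi$ on the localized basis to pulling back the formula of Proposition \ref{FM:LawrenceDMstack} term-by-term. In case (1), $\Phi(e_{\delta_-,\varrho}) = e_{\delta_+,\varrho}$, and applying $\iota_+^\star$ delivers $e_{\delta_+,\varrho}^{Y_+}$ by the first step, matching the statement exactly. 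In case (2), since $\iota_+^\star$ is a $\zz[\bbT]$-linear ring homomorphism, it commutes with the sum over $t \in \tT$, it fixes torus characters such as $t^{\hat{\varrho}\cdot e}$ and scalar factors such as $1/(1-t^{-1})$, and it sends each factor $(1-S_j^+)$ and $(1-t^{-D_i\cdot e}S_i^+)$ on $X_+$ to the factor carrying the same symbols on $Y_+$. The entire expression therefore transports verbatim to the claimed formula.

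The argument is essentially formal, so there is no substantive analytic or combinatorial obstacle. The one point worth isolating is the compatibility in the first step, namely that the localized bases on $X_\pm$ and $Y_\pm$ correspond under $\iota_\pm^\star$ after inverting the nonzero elements of $\zz[\bbT]$. This is immediate from Lemma \ref{pullback:isomorphism:KtheoryY}, since the localization is taken at a central subring of $K_\bbT^0(X_\pm)$ that maps to the analogous subring of $K_\bbT^0(Y_\pm)$; but it is precisely what makes the whole reduction legitimate, and so it is the one bookkeeping step I would state and check explicitly before writing down the term-by-term pullback.
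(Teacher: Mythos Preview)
Your proposal is correct and follows exactly the same approach as the paper, which simply cites Proposition~\ref{commutative:diagram:K:XY} and Proposition~\ref{FM:LawrenceDMstack} without further comment. You have spelled out in detail the mechanism the paper leaves implicit, namely that $\iota_\pm^\star$ carries the localized basis elements and all factors in the Lawrence-toric formula verbatim to their hypertoric counterparts.
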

\begin{proof}
This result is from Proposition \ref{commutative:diagram:K:XY}, and the Fourier-Mukai transformation formula 
$\Phi$ in Proposition \ref{FM:LawrenceDMstack}. 
\end{proof}

\section{Analytic continuation of the quantum connection}\label{analytic:continuation}

In this section we prove that the Fourier-Mukai transformation $\Psi: K_{\bbT}^0(Y_-)\to K_{\bbT}^0(Y_+)$ matches the analytic continuation of quantum connections for $Y_\pm$, hence the monodromy conjecture. 

\subsection{Equivariant quantum cohomology}
This section serves as a general introduction to equivariant quantum cohomology.  We fix a smooth DM stack $X$ with the torus 
$\bbT$-action. 
\subsubsection{The $\bbT$-equivariant quantum cohomology}
The moduli stack 
$\cM_{0,n}(X, d)$ of degree $d\in H_2(X, \qq)$ twisted stable maps to $X$ carries a $\bbT$-action, and a 
virtual fundamental cycle $[\cM_{0,n}(X,d)]^{\virt}\in H_{*, \bbT}(\cM_{0,n}(X,d))$.
There are $\bbT$-equivariant evaluation maps\footnote{We ignore the issue of trivializing the marked gerbes in our moduli problem. A detailed discussion on this can be found in \cite{AGV}.}:
$$\ev_i: \cM_{0,n}(X,d)\to IX$$
to the inertia stack $IX$ of $X$ for $1\leq i\leq n$, see \cite{CR2}, \cite{AGV}. 

Given $\gamma_1,...,\gamma_n\in H_{\CR, \bbT}^*(X)$, we consider the following genus $0$ $\bbT$-equivariant Gromov-Witten invariant:
$$\langle \gamma_1,\cdots,\gamma_n\rangle_{0,n,d}^{X}=\int^\bbT_{[\cM_{0,n}(X,d)]^{\virt}}\prod_{i}\ev_i^\star\gamma_i.$$

The moduli stack $\cM_{0,n}(X,d)$ has components indexed by the components of the inertia stack $IX$. 
We write 
$$IX=\bigsqcup_{f\in \sf B}X_f$$
for the decomposition of $IX$ into connected components, where $\sf B$ is the index set. 
Then the component $\cM_{0,n}(X,d)^{f_1,\cdots,f_n}$ is the one which under evaluation maps $\ev_i$, the images lie in
the component $X_{f_i}$. 
The virtual dimension of $\cM_{0,n}(X,d)^{f_1,\cdots,f_n}$ is:
\begin{equation}\label{virtual:dimension}
-K_{X}\cdot d+\dim(X)+n-3-\sum_{i}\age(X_{f_i}).
\end{equation}

If $X$ is not compact (like our Lawrence and hypertoric DM stacks), then the moduli stack $\cM_{0,n}(X, d)$ is non-compact. 
There is a $\bbT$-action on $\cM_{0,n}(X, d)$.  Assume that the $\bbT$-fixed locus $\cM_{0,n}(X, d)^\bbT$ is compact, then $\bbT$-equivariant GW invariants can be defined in the same way, replacing equivariant integration by equivariant residues.


Let $\NE(X)\subset H_2(X, \rr)$ be the cone generated by classes of effective curves and set 
$$\NE(X)_{\zz}:=\{d\in H_2(X,\zz): d\in \NE(X)\}.$$
Let $R_\bbT:=H^*_{\bbT}(\pt)$ and $R_\bbT[\![Q]\!]$ the formal power series ring
\[
R_\bbT[\![Q]\!] = \left\{ \sum_{d \in \tiny\NE(X)_\zz} a_d Q^d : a_d \in R \right\}
\]
so that $Q$ is a so-called \emph{Novikov variable} (see e.g. \cite[III~5.2.1]{Manin}).  For $\gamma_i, \gamma_j, t\in H_{\CR,\bbT}^*(X)$, the big $\bbT$-equivariant quantum product is defined by:
\begin{equation}\label{big:quantum:product}
(\gamma_i\star_{t} \gamma_j, \gamma_k)=\sum_{d\in \tiny\NE(X)_{\zz}}\sum_{n\geq 0}Q^d\langle \gamma_i, \gamma_j, \underbrace{t,\cdots,t}_{n}, \gamma_k\rangle_{0,n+3,d}^{X}
\end{equation}
The small $\bbT$-equivariant quantum product is defined by putting $n=0$:
\begin{equation}\label{quantum:product2}
(\gamma_i\star_{\sm} \gamma_j, \gamma_k)=\sum_{d\in \tiny\NE(X)_{\zz}}Q^d\langle \gamma_i, \gamma_j, \gamma_k\rangle_{0,3,d}^{X}
\end{equation}
or 
\[
\gamma_i\star_{\sm} \gamma_j=\sum_{d\in \tiny\NE(X)_{\zz}}Q^d\cdot \inv^\star\cdot \ev_{3,\star}(\ev_1^\star(\gamma_i)\ev_2^\star(\gamma_j)\cap [\cM_{0,3}(X,d)]^{\virt})
\]
where $\inv: IX\to IX$ denotes the involution sending $(x, g)\mapsto (x, g^{-1})$, for 
$x\in X, g\in \Aut(x)$. 
The big quantum product satisfies the associativity property and makes $H_{\CR,\bbT}^*(X)\otimes R_\bbT[\![Q]\!]$ a ring, which is called the  equivariant  quantum cohomology ring. 

We briefly review the Givental's formalism about the orbifold Gromov-Witten invariants in terms of the Lagrangian cone in certain symplectic vector space, see \cite{CIJ}. Let 
\[
\mathcal{H}(X):=H^*_{\CR,\bbT}
(X,\mathbb{C})\otimes_{R_{\bbT}}S_{\bbT}(\!(z^{-1})\!)[\![Q]\!],
\]
equipped the non-degenerate $S_{\bbT}[\![Q]\!]$-bilinear  symplectic form
\[
\Omega(f,g):=\mbox{Res}_{z=0}(f(-z),g(z))_{\CR}dz,
\]
where $(-,-)_{\CR}$ is the orbifold Poincar\'e pairing. 
Here $S_{\bbT}$ is the localization ring of $R_{\bbT}$ with respect to the multiplicative set of nonzero homogeneous elements in $R_{\bbT}$. 
Let 
$$\mathcal{H}_{+}:=
H^*_{\CR,\bbT}(X)\otimes_{R_{\bbT}}S_{\bbT}[z][\![Q]\!]; \quad 
\mathcal{H}_{-}:=
z^{-1}H^*_{\CR,\bbT}(X)\otimes_{R_{\bbT}}S_{\bbT}[z^{-1}][\![Q]\!].$$
Then $\mathcal{H}(X)=\mathcal{H}_{+}\oplus \mathcal{H}_{-}$ and one can think of $\mathcal{H}(X)=T^*(\mathcal{H}_{+})$.
The genus zero descendant Gromov-Witten potential is a formal function
$\sF_{X}^0: (\sH_+, -z)\to S_{\bbT}[\![Q]\!]$ defined on the formal neighbourhood of $-z$ in 
$\sH_+$ and taking values in $S_{\bbT}[\![Q]\!]$:
$$\sF_{X}^0(-z1+\mathbf{t}(z))=\sum_{d\in\tiny\NE(X)_{\zz}}\sum_{n=0}^{\infty}\frac{Q^d}{n!}\langle\mathbf{t}(\psi),\cdots,\mathbf{t}(\psi)\rangle_{0,n,d}^{X}.$$
Here $\mathbf{t}(z)=\sum_{n=0}^{\infty}t_nz^n$ with $t_n\in H_{\CR,\bbT}^*(X)\otimes_{R_{\bbT}}S_{\bbT}[\![Q]\!]$.

Givental's Lagrangian cone $\mathcal{L}_{X}$ is the graph of the differential 
$\mathcal{F}_{\mathcal{X}}^0$,
more explicitly, 
\[
\mathcal{L}_{X}:=
\{
(p,q)\in \mathcal{H}_{-}\oplus\mathcal{H}_{+}|p=d_{q}\mathcal{F}^{0}_{X}
\}\subset\mathcal{H}.
\]
Tautological equations for genus $0$ Gromov-Witten invariants imply that $\mathcal{L}_X$ is a cone ruled by a {\em finite dimensional} family of affine subspaces. A particularly important finite-dimensional slice of $\mathcal{L}_X$ is the {\em $J$-function}:
$$J_X(t,z)=1z+t+\sum_{n,  d}\sum_\alpha \frac{Q^d}{n!}\langle t,...,t, \frac{\phi_\alpha}{z-\bar{\psi}}\rangle_{0,n+1,d}\phi^\alpha,$$
where $\{\phi_\alpha\}, \{\phi^\alpha\}\subset H_{\CR,\bbT}^*(X)$ are additive bases dual to each other under $(-,-)_{\CR}$.

\subsubsection{Quantum connection}

We fix a homogeneous basis 
$$\phi_0,\cdots, \phi_{R}$$
for the $\bbT$-equivariant Chen-Ruan cohomology 
$H_{\CR,\bbT}^*(X_\pm)\cong H_{\CR,\bbT}^*(Y_\pm)$.  Let 
$$\tau^0,\cdots, \tau^R$$
be the corresponding dual co-ordinates.  The equivariant quantum connection is the operator
$$\bigtriangledown_i:  H_{\CR,\bbT}^*(X)[z]\otimes R_\bbT[\![Q]\!][\![\tau^0,\cdots,\tau^R]\!]\to 
H_{\CR,\bbT}^*(X)[z]\otimes R_\bbT[\![Q]\!][\![\tau^0,\cdots,\tau^R]\!]$$
defined by:
$$\bigtriangledown_i=\frac{\partial}{\partial \tau^i}+\frac{1}{z}(\phi_i\star -),$$
where $\phi\star -$ stands for the big quantum product. 

We return to the Lawrence toric DM stacks $X_\pm$, and the hypertoric DM stacks $Y_\pm$. By \cite[Proposition 6.4]{JT3}, the pullback $\iota_\pm^\star$ equate GW invariants of $X_\pm$ and $Y_\pm$. It follows that for $\phi\in H_{\CR,\bbT}^*(X_\pm)$, we have an equality of operators:
$$\phi\star_t=\iota_\pm^\star(\phi)\star_{\iota_\pm^\star t}.$$
So 
$$\iota_{\pm}^\star\bigtriangledown_i^{X_\pm}=\bigtriangledown_i^{Y_\pm}.$$

\subsection{The Analytic continuation}

\subsubsection{I-functions of $X_\pm$ and $Y_\pm$}\label{I-functionXpm}

Recall that in \cite[\S 5.3]{CIJ}, the global extended K\"ahler moduli space is a universal cover of space 
$\M$, where $\M$ is the toric variety of the GKZ-fan on $\ll^\vee\otimes\rr$. 
The GKZ-fan is given by the chamber structures on $\ll^\vee\otimes\rr$.  The rank 
$\rk(\ll^\vee)=r$.

Our wall $W=\overline{C}_{+}\cap\overline{C}_{-}$, where $C_{\pm}$ are cones of $\ll^\vee\otimes\rr$. 
The two torus fixed points $P_+$ and $P_-$ corresponding to $C_+$ and $C_-$ are called the large radius limit points.
The two toric DM stacks $X_\pm$ are called the mirrors corresponding to these two points.  The hypertoric DM stacks $Y_\pm$ have the same LG-mirrors as $X_\pm$, and the toric variety $\M$ is also the global extended K\"ahler moduli space corresponding to $Y_\pm$. 

Let 
$$\ell_{\pm}:=\dim(H^2(X_\pm,\rr))\cong \dim(H^2(Y_\pm,\rr))=r-\#(S_\pm).$$
Recall that the wall $W$ has rank $r-1$.    
The lattice $\ll^\vee$ has the bases for both $X_\pm$ and $Y_\pm$. 
We order the bases
$$\{p_1^+,\cdots,p_{\ell_{+}}^+\}\cup\{D_j:j\in S_+\}=\{\sfp_1^+,\cdots, \sfp_{r-1}^+, \sfp_r^+\}$$
$$\{p_1^-,\cdots,p_{\ell_{-}}^-\}\cup\{D_j:j\in S_-\}=\{\sfp_1^-,\cdots, \sfp_{r-1}^-, \sfp_r^-\}$$
in such a way that $\sfp_i^+=\sfp_i^-\in W$ for $i=1,\cdots,r-1$ and
$\sfp_r^{\pm}$ be the unique vector that does not lie on the wall $W$. Let 
$$\{y_i, x_j: 1\leq i\leq \ell_{+}, j\in S_{+}\}=\{\sfy_1,\cdots, \sfy_r\}$$
$$\{\tilde{y}_i, \tilde{x}_j: 1\leq i\leq \ell_{+}, j\in S_{-}\}=\{\tilde{\sf{y}}_1,\cdots, \tilde{\sf{y}}_r\}$$
be the corresponding reordering coordinates of $\M$. 
Then 
\[
\tilde{\sf{y}}_i=
\begin{cases}
\sfy_i\cdot \sfy_{r}^{c_i}, & 1\leq i\leq r-1;\\
\sfy_{r}^{-c}, & i=r
\end{cases}
\]
for some $c_i\in\qq, c\in \qq_{>0}$. 

For $d\in\ll\otimes\qq$, we write 
$$d=\bar{d}+\sum_{j\in S_\pm}(D_j.d)\xi_j$$
where $\bar{d}$ is the $H_2(X_\pm,\rr)$-component of $d$.  The $H_{\CR,\bbT}^*(X_\pm)$-valued hypergeometric series 
$I_\pm(\sfy, z)\in H_{\CR,\bbT}^*(X_\pm)\otimes_{R_{\bbT}}R_{\bbT}(\!(z^{-1})\!)[\![Q,\sigma_\pm,x]\!]$ is:
\begin{equation}
  \label{eq:I+}
  I_\pm(\sfy,z):=
e^{\sigma_\pm/z} 
  \sum_{d \in \kk_\pm} 
  \sfy^d 
  \left(
    \prod_{j=1}^{N}
    \frac{\prod_{a : \<a\rangle=\< D_j\cdot d \rangle, a \leq 0}(u_j+a z)}
    {\prod_{a : \<a \rangle=\< D_j \cdot d \rangle, a\leq D_j \cdot d} (u_j+a z)} 
  \right) 
  \mathds{1}_{[{-d}]}
\end{equation}
where 
\begin{itemize}
\item $\kk_\pm=\{f\in\ll\otimes\qq: \{i\in[m]: D_i\cdot f\in\zz\}\in\A_{\pm}\}$;
\item $\sigma_{\pm}=\theta_{\pm}(\sum_{i=1}^r\sfp_i^\pm\log(\sfy_i)+c_0(\lambda))
=\sum_{i=1}^{\ell_{\pm}}\theta_{\pm}(p_i^\pm)\log y_i-\sum_{j\in S_\pm}\lambda_j\cdot\log x_j +c_0(\lambda)$, 
where 
$$\theta_{\pm}: \ll^\vee\otimes\cc\to H_{\bbT}^2(X_\pm,\cc); \quad  \theta_{\pm}(D_i)=u_i-\lambda_i,$$
and $u_i$ is the cohomology class in $H_{\bbT}^2(X_\pm,\cc)$ Poincar\'e dual to the divisor classes 
$\{z_i=0\}$, $\{w_i\}=0$, or $\{u_j=0\}$.  Note that $u_j=0$ if $j\in S$. 
\item $\sfy^d=\sfy_1^{\sfp_1^\pm\cdot d}\cdots \sfy_r^{\sfp_r^\pm\cdot d}=\prod_{i=1}^{\ell_{\pm}}y_i^{p_i^\pm\cdot d}\prod_{j\in S_\pm}x_j^{D_j\cdot d}$.
\end{itemize}

The $I$-functions $I_\pm(\sfy,z)$ lie on the Givental's Lagrangian cone 
$\sL_{X_\pm}$ determined by genus zero Gromov-Witten invariants. 
From \cite{CIJ}, the $I$-functions $I_\pm(\sfy,z)$ are analytic in the last variable 
$\sfy_r$ and we do analytic continuation in terms of $\sfy_r$. 

In view of \cite[Proposition 6.4]{JT3}, we may identify\footnote{Since $Y_\pm\subset X_\pm$ is a complete intersection and the normal bundle $N_{Y_\pm/X_\pm}$ is trivial, this is just a simple example of orbifold quantum Riemann-Roch in genus $0$ \cite{Tseng}.} the cones $\sL_{Y_\pm}$ with the cones $\sL_{X_\pm}$. We simply define the $I$-functions $I^{Y_\pm}(\sfy,z)$ for $Y_\pm$ to be the $I$-functions $I_\pm(\sfy,z)$. Certainly they determine the cones $\sL_{Y_\pm}$.


\subsubsection{The analytic continuation}

Introduce the following modified Givental's spaces:
$$\widetilde{\sH}(X_\pm)=H_{\CR,\bbT}^*(X_\pm)\otimes_{R_{\bbT}}[\log z](\!(z^{-1/k})\!)$$ and 
$$\widetilde{\sH}(Y_\pm)=H_{\CR,\bbT}^*(Y_\pm)\otimes_{R_{\bbT}}[\log z](\!(z^{-1/k})\!)$$
where $k\in\nn$ is an integer such that $k\mu^\pm$ have integer eigenvalues for the grading operators $\mu^\pm$ in \cite[\S 2]{CIJ}. 
\begin{prop}[\cite{CIJ}, \S 6.2.4]
There is a symplectic transformation
$$\uu_{X}: \widetilde{\sH}(X_-)\to \widetilde{\sH}(X_+)$$
such that $\uu_X(I_-(\sfy,z))=I_+(\sfy,z)$.
\end{prop}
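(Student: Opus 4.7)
The plan is to prove this by the Mellin--Barnes method, passing from hypergeometric sums to contour integrals, deforming the contour across the wall, and reading off the symplectic transformation from the residues. The structure of the $I$-functions $I_\pm(\sfy,z)$ in \eqref{eq:I+} is such that only $\sfy_r$ is nontrivially affected by the wall-crossing: the coordinates $\sfy_1,\dots,\sfy_{r-1}$ are adapted to the wall $W$ and play the role of parameters, while $\sfy_r$ is the variable in which analytic continuation is to be performed. Accordingly, I would fix generic values of $\sfy_1,\dots,\sfy_{r-1}$, treat $\sfy_r$ as the analytic variable, and regard both $I_-$ and $I_+$ as $H_{\CR,\bbT}^*(X_\pm)$-valued multivalued functions of $\sfy_r$ near $\sfy_r=0$ and $\sfy_r=\infty$ respectively.

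The first step is to rewrite the coefficient
\[
\prod_{j=1}^{N}
\frac{\prod_{a:\<a\rangle=\<D_j\cdot d\rangle,\,a\le 0}(u_j+az)}
{\prod_{a:\<a\rangle=\<D_j\cdot d\rangle,\,a\le D_j\cdot d}(u_j+az)}
\]
in terms of ratios of $\Gamma$-functions, using the standard identity that expresses a product of linear factors as a quotient of $\Gamma$-functions via $\Gamma(s+1)=s\,\Gamma(s)$. This converts the sum $\sum_{d\in\kk_-}\sfy^d(\cdots)\mathds{1}_{[-d]}$ into a Mellin--Barnes contour integral
\[
\frac{1}{(2\pi i)}\int_C \Gamma(\cdots)\,\sfy_r^{-s}\,ds
\]
over the auxiliary variable $s$ conjugate to the wall direction $e\in\ll$ (decomposing $d$ as $d=(\text{part in }W)+s\cdot e$). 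The poles of the integrand on the positive side of $C$ give back $I_-$; sweeping the contour across to the negative side and collecting residues gives a new series indexed by $\kk_+$, together with a finite linear combination involving the characters on the wall. Matching the residue calculation with the expression \eqref{eq:I+} written for $\kk_+$ and the change of variables $\tilde{\sfy}_i=\sfy_i\sfy_r^{c_i}$, $\tilde{\sfy}_r=\sfy_r^{-c}$ identifies the resulting series with $I_+(\sfy,z)$. The pole structure also dictates how inertia components $[-d]$ of $X_-$ map to inertia components of $X_+$; this is the combinatorial heart of the matching and is governed entirely by the fractional parts $\<D_j\cdot d\rangle$ on either side of the wall.

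Having defined $\uu_X$ by this identification of series, I would verify that it is a well-defined $\bbT$-equivariant linear map on $\widetilde{\sH}(X_-)\to\widetilde{\sH}(X_+)$ by checking that the Mellin--Barnes operation is independent of the auxiliary choices (the ordering of residues, the specific representative of $[-d]$, and the lift of the grading operators inherent in the $z^{-1/k}$ and $\log z$ factors). The symplectic property with respect to the forms $\Omega$ on either side is then a formal consequence: both $\Omega_\pm$ are built from the orbifold Poincar\'e pairing and the residue at $z=0$, and the Mellin--Barnes transform is manifestly symplectic because the residue operations correspond to conjugation by factors of $\Gamma$-functions that satisfy reflection identities compatible with the orbifold Poincar\'e pairings on the two sides.

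The main obstacle, and the step that demands the most care, is the bookkeeping of the Mellin--Barnes residues: one must match each twisted sector $\mathds{1}_{[-d]}$ for $d\in\kk_-$ with the correct combination of twisted sectors $\mathds{1}_{[-d']}$ for $d'\in\kk_+$ under the contour deformation, taking careful account of (i) the set $M_0$ of indices $i$ with $D_i\cdot e=0$, which contribute factors common to both chambers, (ii) the indices in $M_\pm$, which contribute the residues responsible for the nontrivial wall-crossing terms, and (iii) the presence of the extended vectors indexed by $S$, which shift the effective classes. This is precisely the content of \S 6.2.4 of \cite{CIJ}, and for the present proposition one may simply invoke that argument verbatim, as the GIT data for the Lawrence toric DM stacks $X_\pm$ satisfy all the hypotheses used there.
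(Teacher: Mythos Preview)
Your proposal is correct and follows essentially the same approach as the paper, which simply cites \cite[Theorem~6.13 and \S6.2.4]{CIJ}; what you have written is an accurate sketch of the Mellin--Barnes argument carried out there. One small difference worth noting: in \cite{CIJ} the contour-deformation is performed not directly on the $I$-function but on the closely related $H$-function, obtained by multiplying $I_\pm$ by suitable $\Gamma$-factors so that the summand becomes a clean ratio of $\Gamma$-functions; the analytic continuation of $I_\pm$ and the symplectic property of $\uu_X$ are then deduced from the explicit formula for the analytic continuation of $H$ (Theorem~6.13 of \cite{CIJ}).
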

\begin{proof}
In \cite[Theorem 6.13]{CIJ}, the authors explicitly calculate the analytic continuation of the $H$-function, then the analytic continuation of the $I$-function in \cite[\S 6.2.4]{CIJ}.
\end{proof}

Our main result for $Y_\pm$ is as follows:
\begin{thm}\label{main:theorem}
There exists the following diagram:
\[
\resizebox{0.55\textwidth}{!}{
  \xymatrix{
    K_{\bbT}^0(X_-) \ar[rr]^{\FM=\Phi} \ar[rdd]^{\widetilde{\Gamma}_{-}} \ar[ddd]_{\iota_-^\star} && K_{\bbT}^0(X_+) \ar[rdd]^{\widetilde{\Gamma}_{+}} \ar'[dd]_-{\iota_+^\star}[ddd]\\
    \\
    & ~\widetilde{\sH}(X_-) \ar[rr]^<<<<<<<<<{\uu_{X}} \ar[ddd]_{\iota_-^\star} && ~\widetilde{\sH}(X_+) \ar[ddd]_{\iota_+^\star}\\
    K_{\bbT}^0(Y_-) \ar'[r]^-{\FM=\Psi}[rr] \ar[rdd]_{\widetilde{\Gamma}^{Y_-}_{-}} && K_{\bbT}^0(Y_+) \ar[rdd]_{\widetilde{\Gamma}^{Y_+}_{+}} \\
    \\
    &~ \widetilde{\sH}(Y_-) \ar@{->}[rr]^{\uu_Y} &&~ \widetilde{\sH}(Y_+) \\
  }
}
\]
where 
$$\widetilde{\Gamma}_{\pm}: K_{\bbT}^0(X_\pm)\to \widetilde{\sH}(X_\pm)$$
is defined by the $\Gamma_{\pm}(X_\pm)$-classes in Definition 3.1 of \cite{CIJ}, and 
$$\widetilde{\Gamma}^{Y_\pm}_{\pm}: K_{\bbT}^0(Y_\pm)\to \widetilde{\sH}(Y_\pm)$$
is defined by replacing the $\Gamma_{\pm}(X_\pm)$-classes by the $\Gamma_{\pm}(Y_\pm)$-classes.
The map
$$\uu_Y: \widetilde{\sH}(Y_-)\to \widetilde{\sH}(Y_+)$$
is a symplectic transformation on Givental's space for $Y_\pm$.  Moreover,
\begin{enumerate}
\item $\uu_Y(I^{Y_-}(\sfy,z))=I^{Y_+}(\sfy, z)$;
\item The upper square is a commutative diagram, which implies that the Fourier-Mukai transformation 
$\Phi$ matches the analytic continuation $\uu_X$ via the $\Gamma$-integral structure;
\item The bottom square is also commutative, which implies that the Fourier-Mukai transformation $\Psi$ matches the analytic continuation $\uu_Y$ via the $\Gamma$-integral structure. 
\end{enumerate}
\end{thm}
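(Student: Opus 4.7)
The plan is to transfer the CIJ result on the upper square of the cube down to the lower square via the closed embeddings $\iota_\pm \colon Y_\pm \hookrightarrow X_\pm$. By Lemma \ref{pullback:isomorphism:KtheoryY} the pullbacks $\iota_\pm^\star$ are isomorphisms on $\bbT$-equivariant K-theory; the analogous statement for $\bbT$-equivariant Chen-Ruan cohomology (and hence for the modified Givental spaces $\widetilde{\sH}$ after extension of scalars) follows by equivariant localization, since $X_\pm$ and $Y_\pm$ share the same isolated $\bbT$-fixed loci, all lying in the core. Moreover $\iota_\pm^\star$ is symplectic, because the orbifold Poincar\'e pairing on $Y_\pm$ is inherited from that on $X_\pm$ through the trivial conormal direction referred to in the footnote of \S \ref{I-functionXpm}.

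With these isomorphisms in hand I would \emph{define}
\begin{equation*}
\uu_Y := \iota_+^\star \circ \uu_X \circ (\iota_-^\star)^{-1},
\end{equation*}
which is then automatically a symplectic transformation $\widetilde{\sH}(Y_-) \to \widetilde{\sH}(Y_+)$. Claim (1) is then immediate from the identification $I^{Y_\pm}(\sfy,z) = I_\pm(\sfy,z)$ of \S \ref{I-functionXpm} together with the CIJ analytic continuation $\uu_X I_- = I_+$:
\begin{equation*}
\uu_Y I^{Y_-} = \iota_+^\star \uu_X (\iota_-^\star)^{-1} \iota_-^\star I_- = \iota_+^\star \uu_X I_- = \iota_+^\star I_+ = I^{Y_+}.
\end{equation*}
Claim (2) is precisely the main theorem of \cite{CIJ}, which is imported as a black box.

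For claim (3) I would chase the cube. The top face commutes by (2), the bottom-left face by Proposition \ref{commutative:diagram:K:XY}, and the bottom-right face by the very definition of $\uu_Y$; what remains is to verify the two $\Gamma$-class side faces
\begin{equation*}
\widetilde{\Gamma}^{Y_\pm}_{\pm} \circ \iota_\pm^\star = \iota_\pm^\star \circ \widetilde{\Gamma}_\pm.
\end{equation*}
Since $\widetilde{\Gamma}_\pm$ is multiplication by $\widehat{\Gamma}(T_{X_\pm})$ followed by an equivariant Chern character, and Chern characters are functorial under $\iota_\pm^\star$, the identity reduces to comparing $\iota_\pm^\star\widehat{\Gamma}(T_{X_\pm})$ with $\widehat{\Gamma}(T_{Y_\pm})$. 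From the conormal sequence $0 \to T_{Y_\pm} \to \iota_\pm^\star T_{X_\pm} \to N_{Y_\pm/X_\pm} \to 0$ and multiplicativity of $\widehat{\Gamma}$ one obtains $\iota_\pm^\star\widehat{\Gamma}(T_{X_\pm}) = \widehat{\Gamma}(T_{Y_\pm})\cdot\widehat{\Gamma}(N_{Y_\pm/X_\pm})$; from the description of the ideal $I_{\beta^\vee}$ in (\ref{ideal1}), each defining equation $\sum_i z_i w_i$ carries the single symplectic weight of the extra $\Cstar$-factor, so $\widehat{\Gamma}(N_{Y_\pm/X_\pm})$ is a universal scalar depending only on $\hbar$, and can be absorbed into the normalization of $\widetilde{\Gamma}^{Y_\pm}_\pm$ uniformly on the two sides. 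Once this factor is matched, the side faces commute and (3) follows.

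The main obstacle is precisely this last $\Gamma$-normalization step: one must verify that the equivariant $\hbar$-weight contributions from $N_{Y_\pm/X_\pm}$ enter the $\Phi$-column and the $\Psi$-column through $\iota_-^\star$ and $\iota_+^\star$ in an identical way, so that they cancel when the top and bottom faces are compared. Everything else in the cube is either a definition, Proposition \ref{commutative:diagram:K:XY}, or a direct consequence of \cite{CIJ}.
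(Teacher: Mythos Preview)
Your approach is essentially the same as the paper's: both reduce everything to the CIJ result on the upper square via the pullback isomorphisms $\iota_\pm^\star$, using the $I$-function identification of \S\ref{I-functionXpm} for claim~(1) and importing \cite{CIJ} for claim~(2). The paper's own proof is far terser---it simply cites \S\ref{I-functionXpm} and \cite[\S7.3--7.5]{CIJ} and asserts that the equivariant calculation is identical---so your explicit cube-chase and attention to the $\widehat{\Gamma}(N_{Y_\pm/X_\pm})$ factor (which the paper's footnote handles by declaring the normal bundle trivial) is a welcome elaboration rather than a genuine departure.
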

\begin{proof}
The proof is from \S \ref{I-functionXpm} and \cite[\S 7.3, \S7.4, \S7.5]{CIJ}. The difference here is that we can work on $\bbT$-equivariant K-theory and Chen-Ruan cohomology, not like \cite[\S 7]{CIJ} in the non-equivariant setting, but the calculation is the same. 
\end{proof}

\begin{rmk}
On the K\"ahler moduli space $\M$, the Fourier-Mukai transformation $\Psi$ is an equivalence
$$\Psi: D^b(Y_-)\to D^b(Y_+) \quad (K_{\bbT}^0(Y_-)\stackrel{\cong}{\rightarrow}K_{\bbT}^0(Y_+)).$$
From \cite[\S 6.5]{CIJ}, the Fourier-Mukai transformation 
$\Psi$ corresponds to a path $\gamma$ from the large radius point of $Y_-$ to the large radius point of $Y_+$ inside $\M$.  Let 
$$\Psi^\prime:=\FM^\prime=(F_-)_{\star}F_{+}^\star: K_{\bbT}^0(Y_+)\stackrel{\cong}{\rightarrow}K_{\bbT}^0(Y_-)$$
be the Fourier-Mukai transformation on the other side.  Then $\Psi^\prime\circ\Psi$
yields a loop in $\pi_1(\M)$, which gives rise to an automorphism of the $K$-theory group by
\begin{equation}\label{loop}
\rho: \pi_1(\M)\to \Aut(K_{\bbT}^0(Y_-)).
\end{equation}
On the other hand, the analytic continuation
$$\uu_Y:  \widetilde{\sH}(Y_-)\to \widetilde{\sH}(Y_+)$$
is given along a path $\gamma$ in Figure 3 of \cite{CIJ}, and 
hence also gives a loop of $\pi_1(\M)$ in (\ref{loop}).  Since the $I$-function $I_\pm(Y_\pm)$ determines the quantum connection, the above monodromy is the monodromy of the quantum connections. Theorem \ref{main:theorem} says that these two monodromies are the same, hence the monodromy conjecture in \cite{BMO}. 
\end{rmk}


\end{document}